\newtheorem{defi}{Definition}[section]
\newtheorem{exa}{Example}[section]
\newtheorem{lema}{Lemma}[section]
\newtheorem{teo}{Theorem}[section]
\newtheorem{rem}{Remark}[section]
\newtheorem{coro}{Corollary}[section]
\newtheorem{pro}{Proposition}[section]
\newcommand{\T}{\mathbb{T}}
\newcommand{\Td}{\mathbb{T}^d}
\newcommand{\C}{\mathbb{C}}
\newcommand{\R}{\mathbb{R}}
\newcommand{\N}{\mathbb{N}}
\newcommand{\Z}{\mathbb{Z}}
\newcommand{\norm}[1]{\left\lVert#1\right\rVert}
\newcommand{\esp}{\text{  }}
\begin{document}

\title[ON SOME SPECTRAL PROPERTIES OF PDOS ON  $\mathbb{T}$]
 {ON SOME SPECTRAL PROPERTIES OF PSEUDO-DIFFERENTIAL OPERATORS ON  $\mathbb{T}$}

\author[J. P. Velasquez-Rodriguez]{Juan Pablo Velasquez-Rodriguez}

\address{Department of Mathematics\\
Universidad del Valle\\
Cali\\
Colombia}

\email{velasquez.juan@correounivalle.edu.co}


\subjclass[2010]{Primary; 58J40; Secondary: 47A10.}

\keywords{Spectral theory, Pseudo-differential operators, Riesz operators, Operator ideals,  Gershgorin theory, Fourier Analysis }

\date{August 10, 2018}
\begin{abstract}
In this paper we use Riesz spectral Theory and  Gershgorin Theory to obtain explicit information concerning the spectrum of pseudo-differential operators defined on the unit circle $\mathbb{T} := \mathbb{R}/ 2 \pi \mathbb{ Z}$. For symbols in the Hörmander class $S^m_{1 , 0} (\T \times \Z)$, we provide a  sufficient and necessary condition to ensure that the corresponding pseudo-differential operator is a Riesz operator in $L^p (\mathbb{T})$, $1< p < \infty$, extending in this way compact operators characterisation in \cite{shahla} and Ghoberg's lemma in \cite{Molahajloo2010} to $L^p (\mathbb{T})$. We provide an example of a non-compact Riesz pseudo-differential operator in $L^p (\T)$, $1<p<2$. Also, for  pseudo-differential operators with symbol satisfying some integrability condition, it is defined its associated matrix in terms of the Fourier coefficients of the symbol, and this matrix is used to give necessary and sufficient conditions for $L^2$-boundedness without assuming any regularity on the symbol, and to locate the spectrum of some operators. 
\end{abstract}

\maketitle

\section*{\textbf{INTRODUCTION}}

Pseudo-differential operators acting on functions defined on smooth manifolds are an important generalisation of differential operators, and their study is a very active branch of contemporary mathematics, among other reasons, because of its applications in various areas of physics, such as black-hole physics, quantum electrodynamics and quantum field theory \cite{QFT, Geometry}. Pseudo-differential operators considered when the manifold is a compact Lie group have arisen as in interesting arena from the point of view of Ruzhansky-Turunen Theory, in which the representations of the group allow to define a global symbol instead of symbols depending on charts. 

The simplest example in this direction is the case of the one dimensional torus, on which we have the concept of periodic pseudo-differential operators acting on periodic functions. These operators have been widely
studied \cite{JulioLpbounds, Duvan1, Duvan2, MolahajloS1, Ghaemi2, Ghaemi3, Ghaemi2017, Pirhayati2011, delgado_ruzhansky_2017} and remarkable results have been obtained, being one of the most important, perhaps the most, the global definition of pseudo-differential operators on the unit circle in \cite{Agranovich1979}, proposed by Agranovich in 1979 crediting L.R. Volevich.  The definition is illuminating and readily generalisable for any torus $\Td$, but it is a non-trivial fact that it is equivalent to the original one given by Hörmander. However, time after Agranovich published a definition, McLean in \cite{Mclean} proved the equivalence for operators with symbols in the Hörmander $(\rho , \delta)$- classes, and since then other proofs have been published.

In this paper we will be working within the framework of the global symbols introduced by Ruzhansky and Turunen with extension and periodisation techniques \cite{ruzhanskytor2008} since their approach is useful and elegant and that is why their book \cite{ruzhansky1} will be one of our main references. This work is organised as follows: 

\begin{itemize}
    \item In Section 1 we recall some of the basics on periodic pseudo-differential operators from the point of view of Ruzhansky and Turunen Theory.
\item In Section 2, we treat Riez operators in the sense of Pietsch, which are  those operators acting on a Banach space with spectrum behaviour identical to that of compact operators. Inspired by Molahajloo in \cite{shahla}, we give the main theorem of the section, a necessary and sufficient condition for a pseudo-differential operator with symbol in the Hörmander class $S^0_{1,0} (\mathbb{T} \times \mathbb{Z} )$ to be a Riesz operator in $L^p (\mathbb{T})$, $1< p < \infty$ (Theorem 2.4), together with an example of a non-compact Riesz pseudo-differential operator.

\item In Section 3 the interest is focused in the study of global pseudo-differential operators acting on $L^2 (\T)$. It is defined the infinite matrix associated to the operator, and it is used to study operators in $L^2 (\T)$ factorising in terms of operators in $L^2 (\Z)$, which are generally easier to deal with. As a result we obtain necessary and sufficient conditions for boundedness (Theorems 3.1 and 3.2) without assuming any regularity on the symbol, and sufficient conditions for invertibility, and spectrum localisation (Theorems 3.4 and 3.5).

\item In Section 4 we discuss a proof of Ghoberg's lemma in $L^p (\T)$ as an extension of \cite[Theorem 3.3]{Molahajloo2010}.

\end{itemize}

 \section{\textbf{ Periodic  Pseudo-differential Operators. }}
 In this section we recall the definition of  periodic  global pseudo-differential operator, the periodic version of  Hörmander classes and the basics for the corresponding pseudo-differential calculus in the sense of Ruzhansky-Turunen theory \cite{ruzhansky1}. We remark that it is possible to consider global pseudo-differential operators associated to full symbols which are $p$-integrable in the toroidal variable.
 
As usual for an integrable function $f: \T \to \C$ we denote its $k$-th Fourier coefficient by $\widehat{f} (k)$, which is defined by $$\widehat{f} (k) : = \int_\T f(x) e^{-i x \cdot k} dx,$$where $k \in \Z$, and $dx$ denotes the normalized Haar measure on $\T$. 
Let $\mathcal{P}(\T):= Span\{e^{i x \cdot k} \esp : \esp k \in \Z\}$ denote the collection of trigonometric polynomials on the unit circle. Consider a given linear operator $A: \mathcal{P} (\T) \to L^p (\T)$ for  $1< p < \infty $. Following  \cite[Theorem 10.4.6]{ruzhansky1} , we associate to the linear operator $A$ the {\em full symbol } $\sigma_A: \T \times \Z \to \mathbb{C}$, defined by $$\sigma_A (x , k) := e^{- i x \cdot k} (A e_k ) (x),$$ where $e_k (x):= e^{i x \cdot k}$. This full symbol satisfies that
\begin{align*}
    (A_\sigma f)(x) = \sum_{k \in \Z} \sigma_A (x , k) \widehat{f} (k) e^{i x \cdot k},
\end{align*}for all $f \in \mathcal{P}(\T)$. On the other hand, given a measurable function $\sigma : \T \times \Z \to \C$ we can define using the equality above a linear operator acting, initially, on some subspace of $L^1(\T).$ Our objective is to find suitable conditions on the function  $\sigma$ in order to ensure that the domain of the corresponding pseudo-differential operator $A_\sigma$ can be extended to  $L^p(\T)$. 
 
Let $m \in \R$, and $0 \leq \rho ,  \delta \leq 1$ given. The periodic Hörmander class $S^m_{\rho , \delta} (\T \times \Z)$ of symbols $\sigma \in C^\infty (\T \times \Z)$ is the set of functions satisfying the estimates
$$ |\Delta_{k}^{t} D_{x}^{r} \sigma (x, k)| \leq C_{t,r,\rho,\delta,m} \langle k \rangle^{m-\rho t + \delta r},$$for each $t,r \in \N$, $k \in \Z$, $x \in \T$,   $C_{t,r,\rho,\delta,m} \in \R^+ $, and $\langle k \rangle := (1+k^2)^{1/2}$. Here the difference operator $\Delta_k^t \varphi$ acts on functions $\varphi: \Z \to \C $ as $$\Delta_k^t \varphi (k) := \sum_{h=0}^{t} (-1)^{t-h} { {t}\choose{h} }\varphi (k + h),$$and for functions $f\in C^r (\T)$ the operator $D_x^r f$ is defined by $$D_x^r f (x) := (-i)^r \partial_x^r f (x).$$
Considering symbols in Hörmander classes allow us to treat with a graded $C^{*}$-algebra of pseudo-differential operators as a consequence of the pseudo-differential calculus summarised in \cite{ruzhansky1} .

We introduce the object of study in this paper, the periodic pseudo-differential operators with $p-$integrable symbols for $1<p< \infty$ fixed. This definition comes from the fact that, in order to obtain a well defined operator from $\mathcal{P}(\T)$ with the $L^p$-norm to $L^p (\T)$, the linear operator defined by formula 

$$(A_\sigma f)(x) = \sum_{k \in \Z} \sigma_A (x , k) \widehat{f} (k) e^{i x \cdot k},$$
should have symbol $\sigma_A (x , k)$ in $L^p (\T)$ for every fixed $k$.

\begin{defi}
{\normalfont Let  $1<p<\infty$ be given. Consider $\sigma : \T \times \Z \to \C $ be a measurable function such that $\sigma (\cdot , k) \in L^p (\T)$ for each $k \in \Z$. The {\em associated periodic pseudo-differential operator} $T_\sigma: \mathcal{P} (\T) \to L^p (\T)$ is defined by
$$(T_\sigma f)  (x) = \sum_{k \in \Z} \sigma (x , k) \widehat{f} (k) e^{i x \cdot k}.$$

In particular, if $\sigma (x,k):= \sigma(k)$ is a function depending only on $k$, the pseudo-differential operator $T_\sigma$ is called a Fourier multiplier.} 
\end{defi}

Recall that for symbols in Hörmander classes, it is possible to provide a symbolic calculus to periodic pseudo-differential operators.  This is stated formally in the following propositions \cite{ruzhansky1}.

\begin{pro}[Toroidal Composition Formula] 
Let $T_\alpha$, $T_\beta$ be periodic pseudo-differential operators with symbols $\alpha \in S^m_{\rho , \delta} (\T \times \Z), \beta \in S^l_{\rho , \delta} (\T \times \Z)$. Then $T_\alpha T_\beta$ is a pseudo-differential operator with symbol $ \sigma $ in the Hörmander class $ S^{m+l}_{\rho , \delta} (\T \times \Z)$ and the toroidal symbol $\sigma$ has the following asymptotic expansion:
$$ \sigma (x , k ) \sim \sum_{h}  \frac{1}{h !} \Delta_{k}^{h} \alpha (x, k) D_{x}^{h} \beta (x ,k) ,
$$where the above means that for each $N \in \N$ we have
 
$$\sigma (x , k ) - \sum_{h< N}  \frac{1}{h !} \Delta_{k}^{h} \alpha (x, k) D_{x}^{h} \beta (x ,k) \in S^{m-N}_{\rho,p \delta} (\T \times \Z).$$
\end{pro}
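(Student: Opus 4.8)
The plan is to write down the symbol of $T_\alpha T_\beta$ by acting on the exponentials $e_k$, to expand the frequency dependence of $\alpha$ by a discrete Taylor formula, and then to control the resulting remainder with the Hörmander estimates on $\alpha$ and $\beta$. Since $\widehat{e_k}(l)=\delta_{kl}$, one first obtains $(T_\beta e_k)(y)=\beta(y,k)\,e^{iy\cdot k}$, a smooth function of $y$ whose $l$-th Fourier coefficient is $\widehat{\beta(\cdot,k)}(l-k)$; applying $T_\alpha$ and multiplying by $e^{-ix\cdot k}$ gives, after the change of index $l=k+j$,
$$ \sigma(x,k)\;=\;e^{-ix\cdot k}(T_\alpha T_\beta e_k)(x)\;=\;\sum_{j\in\Z}\alpha(x,k+j)\,\widehat{\beta(\cdot,k)}(j)\,e^{ix\cdot j}. $$
One should first note that $T_\beta$ maps $\mathcal{P}(\T)$ into $C^\infty(\T)$, where $T_\alpha$ is defined, so that the composition makes sense on $\mathcal{P}(\T)$.

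Next I would insert a discrete Taylor expansion of $\alpha$ in the frequency variable: for each $N\in\N$,
$$ \alpha(x,k+j)\;=\;\sum_{h=0}^{N-1}\binom{j}{h}\,\Delta_k^h\alpha(x,k)\;+\;R_N(x,k,j), $$
where $\binom{j}{h}=j(j-1)\cdots(j-h+1)/h!$ is a polynomial in $j$ of degree $h$ with leading coefficient $1/h!$, and $R_N$ is the remainder of the discrete Taylor formula. Substituting into the formula for $\sigma$ and interchanging the (absolutely convergent) summations, the principal part becomes $\sum_{h<N}\Delta_k^h\alpha(x,k)\big(\sum_{j}\binom{j}{h}\widehat{\beta(\cdot,k)}(j)e^{ix\cdot j}\big)$. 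Since $D_xe^{ix\cdot j}=j\,e^{ix\cdot j}$, one has $j(j-1)\cdots(j-h+1)\,e^{ix\cdot j}=D_x(D_x-1)\cdots(D_x-h+1)\,e^{ix\cdot j}$, so that Fourier inversion applied to $x\mapsto\beta(x,k)$ identifies the bracketed sum as $\tfrac1{h!}\,D_x(D_x-1)\cdots(D_x-h+1)\,\beta(x,k)$, whose top-order term is $\tfrac1{h!}D_x^h\beta(x,k)$. Collecting by the difference order $h$ therefore produces the asymptotic series $\sigma(x,k)\sim\sum_h\tfrac1{h!}\Delta_k^h\alpha(x,k)\,D_x^h\beta(x,k)$.

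It then remains to show that truncating at order $N$ leaves an error of lower order, say in $S^{m+l-(\rho-\delta)N}_{\rho,\delta}(\T\times\Z)$; applying this with $N=1$ and recalling that $\alpha\beta\in S^{m+l}_{\rho,\delta}$ by the Leibniz rules for $\Delta_k$ and $D_x$ (and $\delta\le\rho$) then gives $\sigma\in S^{m+l}_{\rho,\delta}(\T\times\Z)$. The crux is the estimate on $E_N(x,k):=\sum_{j}R_N(x,k,j)\widehat{\beta(\cdot,k)}(j)e^{ix\cdot j}$. One bounds $R_N$ by the discrete Taylor remainder estimate together with the symbol bound $|\Delta_k^N\alpha(x,k')|\le C\langle k'\rangle^{m-\rho N}$ and Peetre's inequality $\langle k+j\rangle^{s}\le C\langle k\rangle^{s}\langle j\rangle^{|s|}$, which leaves a factor growing at most polynomially in $j$; against that one uses the decay $|\widehat{\beta(\cdot,k)}(j)|\le C_r\langle k\rangle^{l+\delta r}\langle j\rangle^{-r}$ valid for every $r$ — integrate by parts in $x$ and use $|D_x^r\beta(x,k)|\le C\langle k\rangle^{l+\delta r}$ — so that choosing $r$ large makes the $j$-sum converge and produces the expected power of $\langle k\rangle$. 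Applying $\Delta_k^t$ and $D_x^r$ term-by-term (both pass through the sum) and repeating these estimates yields the full family $|\Delta_k^tD_x^rE_N(x,k)|\le C_{t,r}\langle k\rangle^{m+l-(\rho-\delta)N-\rho t+\delta r}$; the same scheme absorbs the lower-order ($D_x^i$, $i<h$) corrections coming from the polynomial $\binom{j}{h}$.

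The main obstacle is precisely this remainder analysis: one must balance the polynomial growth in $j$ coming from the Taylor remainder and from Peetre's inequality against the decay in $j$ of $\widehat{\beta(\cdot,k)}(j)$ — a decay that is only as good as the $x$-regularity of $\beta$ permits and that itself costs powers of $\langle k\rangle$ — and do so uniformly after arbitrarily many differences $\Delta_k^t$ and derivatives $D_x^r$ have been applied. Organising this bookkeeping so that every such derivative of $E_N$ comes out with exponent at most $m+l-(\rho-\delta)N-\rho t+\delta r$ is the delicate (though conceptually routine) part; the rest is the formal computation sketched above.
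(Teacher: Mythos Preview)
The paper does not prove this proposition at all: it is stated as background, with the sentence ``This is stated formally in the following propositions \cite{ruzhansky1}'' immediately preceding it, and no proof is given. So there is nothing in the paper to compare your argument against.

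For what it is worth, your sketch is precisely the standard proof one finds in the cited reference (Ruzhansky--Turunen): compute the composed symbol by acting on $e_k$, expand $\alpha(x,k+j)$ via the discrete Taylor formula in $j$, identify the terms using $\binom{j}{h}e^{ix\cdot j}=\tfrac{1}{h!}D_x(D_x-1)\cdots(D_x-h+1)e^{ix\cdot j}$, and control the remainder by playing the rapid decay of $\widehat{\beta(\cdot,k)}(j)$ in $j$ (integration by parts, costing $\langle k\rangle^{\delta r}$) against the polynomial growth from the Taylor remainder and Peetre's inequality. Your remainder order $m+l-(\rho-\delta)N$ is the correct one; note that the paper's displayed class $S^{m-N}_{\rho,p\delta}$ is a typo.
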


\begin{pro}[Adjoint Operator Formula.]
Let $T_\sigma$ be a pseudo-differential operator with symbol $\sigma \in S^m_{\rho , \delta} (\T \times \Z)$. Then the adjoint operator $(T_\sigma)^*$ is a pseudo-differential operator with symbol $\sigma^* \in S^m_{\rho , \delta} (\T \times \Z)$ and it has the following asymptotic expansion

$$\sigma^* \sim \sum_h \frac{1}{h!} \Delta_k^h \partial_x^h \overline{\sigma (x,k)}.$$
\end{pro}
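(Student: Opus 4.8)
The plan is to realise the adjoint $(T_\sigma)^*$ as a toroidal amplitude (double‑symbol) operator and then apply the amplitude‑to‑symbol reduction of the Ruzhansky--Turunen calculus. First I would pair against trigonometric polynomials: for $f,g\in\mathcal P(\T)$, unfolding $\ig T_\sigma f,g\dg_{L^2(\T)}$ by the definition of $T_\sigma$, the identity $\widehat f(k)=\int_\T f(y)e^{-iy\cdot k}\,dy$ and Fubini, and regrouping the sum and the integrals, gives $\ig T_\sigma f,g\dg=\ig f,Rg\dg$ with
$$(Rg)(x)=\sum_{k\in\Z}\int_\T \ol{\sigma(y,k)}\,e^{i(x-y)\cdot k}\,g(y)\,dy.$$
Since $\mathcal P(\T)$ is dense this identifies $(T_\sigma)^*$ with the toroidal amplitude operator $\mathrm{Op}(a)$ whose amplitude is $a(x,y,k):=\ol{\sigma(y,k)}$.

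Next I would check that $a$ lies in the toroidal amplitude class $\mathcal A^m_{\rho,\delta}(\T\times\T\times\Z)$, which is immediate: $a$ is independent of $x$, so its $x$‑derivatives of positive order vanish, while $|\Delta_k^t D_y^r a(x,y,k)|=|\Delta_k^t D_y^r\sigma(y,k)|\leq C_{t,r}\ig k\dg^{m-\rho t+\delta r}$ because $\Delta_k$ and $\partial_y$ commute with complex conjugation. Then I would invoke the amplitude calculus of \cite{ruzhansky1}: every $\mathrm{Op}(a)$ with $a\in\mathcal A^m_{\rho,\delta}$ equals a pseudo‑differential operator $T_{\sigma^*}$ with $\sigma^*\in S^m_{\rho,\delta}(\T\times\Z)$ and
$$\sigma^*(x,k)\sim\sum_{h}\frac{1}{h!}\,\Delta_k^h D_y^h a(x,y,k)\big|_{y=x}.$$
Substituting $a(x,y,k)=\ol{\sigma(y,k)}$ yields at once that $\sigma^*\in S^m_{\rho,\delta}(\T\times\Z)$ and that $\sigma^*\sim\sum_h\frac{1}{h!}\Delta_k^h D_x^h\ol{\sigma(x,k)}$, which is the asserted expansion.

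The substantive point --- and the one I expect to be the main obstacle --- is the amplitude‑to‑symbol reduction itself. Concretely, one first records the exact formula for the symbol: from $\sigma^*(x,k)=e^{-ix\cdot k}\big((T_\sigma)^*e_k\big)(x)$, using $\ig T_\sigma e_j,e_k\dg=\widehat{\sigma(\cdot,j)}(k-j)$ and $\ol{\widehat h(n)}=\widehat{\ol h}(-n)$, one gets
$$\sigma^*(x,k)=\sum_{\ell\in\Z}\widehat{\ol{\sigma(\cdot,k+\ell)}}(\ell)\,e^{i\ell\cdot x},$$
which is well defined because $\sigma\in S^m_{\rho,\delta}$ is $C^\infty$ in $x$, so $\ell\mapsto\widehat{\ol{\sigma(\cdot,k)}}(\ell)$ decays faster than any power of $\ig\ell\dg^{-1}$, uniformly after dividing by $\ig k\dg^{m}$. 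One then applies a discrete Taylor expansion in $k$ to $g(k):=\widehat{\ol{\sigma(\cdot,k)}}(\ell)$ (the Newton forward‑difference identity $g(k+\ell)=\sum_h\binom{\ell}{h}\Delta_k^h g(k)$ being the prototype), compares the falling factorials $h!\binom{\ell}{h}=\ell(\ell-1)\cdots(\ell-h+1)$ with the monomials $\ell^h$, and resums the monomial part against $e^{i\ell\cdot x}$ to recover $\sum_h\frac{1}{h!}\Delta_k^h D_x^h\ol{\sigma(x,k)}$. The hard part is then the remainder estimate: one must show that
$$\sigma^*(x,k)-\sum_{h<N}\frac{1}{h!}\,\Delta_k^h D_x^h\,\ol{\sigma(x,k)}\in S^{m-N(\rho-\delta)}_{\rho,\delta}(\T\times\Z)$$
(in particular in $S^{m-N}_{\rho,0}$ when $\delta=0$), which combines the $\Delta_k^t D_x^r$‑estimates for $\sigma$ with summation by parts in $\ell$ (to exploit the rapid decay of $\widehat{\ol{\sigma(\cdot,k)}}(\ell)$) together with a careful accounting of the lower‑order discrepancy between $\binom{\ell}{h}$ and $\ell^h/h!$. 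Since this discrete Taylor‑with‑remainder argument is precisely the toroidal amplitude calculus established in \cite{ruzhansky1}, I would cite it there rather than reproduce it; the only step needing verification by hand is the membership $\ol{\sigma(y,k)}\in\mathcal A^m_{\rho,\delta}$, which is trivial.
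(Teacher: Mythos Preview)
Your approach is correct and is exactly the standard argument from \cite{ruzhansky1}: realise $(T_\sigma)^*$ as the amplitude operator with $a(x,y,k)=\ol{\sigma(y,k)}\in\mathcal A^m_{\rho,\delta}$ and apply the toroidal amplitude-to-symbol reduction. Note, however, that the paper does not actually prove this proposition at all --- it is stated together with the composition formula as background material and simply cited from \cite{ruzhansky1} (see the sentence preceding Proposition~1.1) --- so your write-up is already more detailed than what the paper provides; there is nothing further to compare.
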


\section{\textbf{Riesz Pseudo-differential Operators }} 
 
In this section we extend the main result of \cite{shahla} on the compactness of periodic pseudo-differential operators. We will provide an alternative proof of main Theorem  in that paper, which characterises the  compact pseudo-differential operators on $L^2(\T)$, using different arguments not requiring the existence of an inner product, implying a result, the main result of this section, for the $L^p (\T)$ case, where $1<p<\infty$. For this purpose we review the already known Gohberg's Lemma, wich we extend to $L^p (\T)$ in Section 4. In what follows,  $\mathcal{L} (E,F)$ will denote the collection of all bounded linear operators from $E$ to $F$ normed spaces, and $\mathfrak{K} (E,F)$ denotes the collection of compact operators in $\mathcal{L} (E,F)$. For $E=F$  we write $\mathcal{L}(E)$ instead of $\mathcal{L} (E,E)$. For $T \in \mathcal{L}(E)$ the resolvent set of $T$ will be denoted by $$Res(T):\{\lambda\in \mathbb{C}:= (T-\lambda I)^{-1}\in \mathcal{L}(E) \},$$ and the spectrum by  $Spec(T):=\mathbb{C}\setminus Res(T)$.

\begin{teo}[Gohberg's Lemma]
Let $T_\sigma$ be a pseudo-differential operator with symbol $\sigma \in S^0_{1,0} (\T \times \Z)$, and take $1<p<\infty$ fixed. Then $\norm{T_\sigma - K}_{\mathcal{L} (L^p (\T))} \geq d_\sigma $  for any compact operator $K \in \mathfrak{K}(L^p(\T))$, where    
\begin{align*}
     d_\sigma:= \limsup_{|k| \to \infty} \{\sup_{x \in \T} |\sigma (x , k)|\}.
\end{align*}
\end{teo}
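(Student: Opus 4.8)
The plan is to reduce the $L^p$ statement to a statement about the behaviour of $T_\sigma$ on a suitable sequence of test functions that becomes "asymptotically supported at high frequencies," and to exploit that compact operators send weakly null sequences to norm null sequences. First I would fix $\varepsilon > 0$ and, using the definition of $d_\sigma$, pick a sequence $|k_j| \to \infty$ and points $x_j \in \T$ with $|\sigma(x_j, k_j)| \to d_\sigma$ (or $\geq d_\sigma - \varepsilon$). The natural test functions are modulated bumps: take $\varphi \in C^\infty(\T)$ with $\|\varphi\|_{L^p} = 1$, concentrate it near $x_j$ by an $L^p$-normalised dilation $\varphi_j$ (on $\T$ one must be a little careful, but for a fixed small scale this is harmless), and set $f_j(x) := e^{i x \cdot k_j}\varphi_j(x)$. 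These satisfy $\|f_j\|_{L^p} = 1$ and, crucially, $f_j \rightharpoonup 0$ weakly in $L^p$ (for $1<p<\infty$, $L^p$ is reflexive, and the Fourier coefficients $\widehat{f_j}(n)$ tend to $0$ for each fixed $n$ because the frequency escapes to infinity), so $Kf_j \to 0$ in $L^p$ for every compact $K$.

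Next I would show that $\|T_\sigma f_j - \sigma(x_j,k_j) f_j\|_{L^p} \to 0$, which combined with $\|f_j\|_{L^p}=1$ and $|\sigma(x_j,k_j)| \to d_\sigma$ gives $\liminf_j \|T_\sigma f_j\|_{L^p} \geq d_\sigma$. For this one writes
\begin{align*}
(T_\sigma f_j)(x) - \sigma(x_j,k_j) f_j(x) = \sum_{n} \big(\sigma(x,n) - \sigma(x_j,k_j)\big)\widehat{f_j}(n)\, e^{i x \cdot n},
\end{align*}
and estimates the right-hand side using two facts: the spectrum of $\varphi_j$ is concentrated near $k_j$ (so effectively only $n$ close to $k_j$ contribute, and for those $\langle n\rangle \approx \langle k_j\rangle$), and the symbol estimates for $\sigma \in S^0_{1,0}$ control both $|\sigma(x,n)-\sigma(x,k_j)|$ via the difference operator $\Delta_k$ and $|\sigma(x,k_j)-\sigma(x_j,k_j)|$ via $D_x\sigma$ together with the small spatial support of $\varphi_j$. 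This is where the membership in the Hörmander class $S^0_{1,0}$ (rather than merely boundedness) is used, and where the $L^p$-boundedness of $T_\sigma$ and of the relevant frequency-localised pieces (via a Marcinkiewicz/Mikhlin-type multiplier argument on $\Z$, or simply via the known $L^p$-boundedness of operators with symbols in $S^0_{1,0}$) enters to handle the tail in $n$.

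Finally, given any compact $K$, I would combine the estimates: for large $j$,
\begin{align*}
\|T_\sigma - K\|_{\mathcal{L}(L^p(\T))} \geq \|(T_\sigma - K)f_j\|_{L^p} \geq \|T_\sigma f_j\|_{L^p} - \|K f_j\|_{L^p},
\end{align*}
and letting $j \to \infty$ yields $\|T_\sigma - K\| \geq d_\sigma - \varepsilon$; since $\varepsilon$ was arbitrary, $\|T_\sigma - K\| \geq d_\sigma$. The main obstacle I anticipate is the second step — proving $\|T_\sigma f_j - \sigma(x_j,k_j)f_j\|_{L^p} \to 0$ with genuine $L^p$ (not $L^2$) control, since one cannot use orthogonality/Plancherel as in the Hilbert space case; the careful choice of the scale of the bump $\varphi_j$ relative to the frequency $|k_j|$, and a uniform multiplier estimate for the frequency-localised remainder, are the technical heart of the argument. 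A secondary subtlety is making the "dilated bump on $\T$" construction precise so that $\|f_j\|_{L^p}=1$, $f_j \rightharpoonup 0$, and the spectral concentration all hold simultaneously.
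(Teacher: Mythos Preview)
Your plan is essentially the paper's: pick $(x_{k_l},k_l)$ with $|\sigma(x_{k_l},k_l)|\to d_\sigma$, use modulated bumps $u_{k_l}(x)=u(x-x_{k_l})e^{ix\cdot k_l}$, observe $u_{k_l}\rightharpoonup 0$ so $Ku_{k_l}\to 0$, and then freeze the symbol to conclude $\|T_\sigma u_{k_l}\|_{L^p}\gtrsim |\sigma(x_{k_l},k_l)|\,\|u\|_{L^p}$.

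The one substantive difference is your dilation step, and it is the source of the obstacle you anticipate. The paper does \emph{not} dilate: it fixes $\varepsilon>0$, uses the uniform $x$-continuity of $\sigma$ (which follows from $|D_x\sigma|\leq C$ in $S^0_{1,0}$, uniformly in $k$) to find a single $\delta>0$ with $|\sigma(x,k_l)-\sigma(x_{k_l},k_l)|<\varepsilon$ for $|x-x_{k_l}|<\delta$ and all $l$, and then chooses one fixed $u\in C^\infty(\T)$ supported in $(-\delta,\delta)$. The freezing is then split as $T_\sigma u_{k_l}\approx \sigma(\cdot,k_l)u_{k_l}\approx \sigma(x_{k_l},k_l)u_{k_l}$: the first approximation is the lemma $\|T_\sigma u_{k_l}-\sigma(\cdot,k_l)u_{k_l}\|_{L^p}\to 0$ (this is where the $\Delta_k$-estimates of $S^0_{1,0}$ enter, and the proof in the cited reference carries over verbatim with the $L^2$-norm replaced by the $L^p$-norm), and the second is the pointwise bound on the small fixed support. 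Because $u$ is fixed, no balancing of spatial scale against frequency is needed, and your ``secondary subtlety'' disappears. Your dilated version would also go through, but the paper's choice is strictly simpler.
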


A proof of the theorem   above can be found in \cite{Molahajloo2010}, where the authors study the case $p=2$. We assert that the proof also works if the $L^2$-norm is replaced by $L^p$-norm in each step. This statement will be proved in Section 4.

\begin{teo}[Molahajloo]\label{Molah}
Let $T_\sigma$ be a pseudo-differential operator with symbol $\sigma \in S^0_{1,0} (\T \times \Z)$. Then $T_\sigma$ is a compact  operator on $L^2 (\T)$ if and only if

\begin{align*}
     d_\sigma:&= \limsup_{|k| \to \infty} \{ \sup_{x \in \T} |\sigma (x , k)|\} =\lim_{|k| \to \infty} \{ \sup_{x \in \T} |\sigma (x , k)| \} = 0.
\end{align*}
\end{teo}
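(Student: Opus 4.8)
The plan is to prove the two directions separately, using Gohberg's Lemma (Theorem 2.1) for the non-trivial implication and a direct approximation argument for the converse.

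\textbf{Sufficiency ($d_\sigma = 0 \Rightarrow T_\sigma$ compact).} Suppose $\lim_{|k|\to\infty}\sup_{x\in\T}|\sigma(x,k)| = 0$. The natural strategy is to exhibit $T_\sigma$ as an operator-norm limit of finite-rank operators. For each $N\in\N$ let $\sigma_N(x,k) := \sigma(x,k)\mathbf{1}_{\{|k|\le N\}}$, and let $T_{\sigma_N}$ be the associated operator. Each $T_{\sigma_N}$ maps into the span of $\{e_k : |k|\le N\}$ composed with multiplication operators, so it factors as a finite sum of rank-one-in-$k$ pieces; more precisely $T_{\sigma_N}f = \sum_{|k|\le N}\sigma(\cdot,k)\widehat f(k)e_k$, which has finite rank (its range lies in a finite-dimensional subspace after noting that multiplication by the smooth function $\sigma(\cdot,k)$ followed by projection onto frequencies is compact — in fact, since $\sigma(\cdot,k)\in C^\infty(\T)$, each summand is bounded on $L^2$, and the finite sum is compact). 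Then I would estimate $\norm{T_\sigma - T_{\sigma_N}}_{\mathcal L(L^2(\T))}$. Since $T_\sigma - T_{\sigma_N}$ has symbol $\sigma(x,k)\mathbf 1_{\{|k|>N\}}$ which still lies in $S^0_{1,0}(\T\times\Z)$, the $L^2$-boundedness theorem for such symbols gives a bound in terms of finitely many seminorms of the tail symbol; the key point is that $\sup_{|k|>N}\sup_x|\sigma(x,k)|\to 0$ and that the difference-derivative seminorms of the tail are controlled by those of $\sigma$. Hence $\norm{T_\sigma - T_{\sigma_N}}\to 0$, so $T_\sigma$ is compact.

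\textbf{Necessity ($T_\sigma$ compact $\Rightarrow d_\sigma = 0$).} This is immediate from Gohberg's Lemma with $p=2$: if $T_\sigma$ is compact, take $K = T_\sigma$ in Theorem 2.1 to get $0 = \norm{T_\sigma - T_\sigma}_{\mathcal L(L^2(\T))} \ge d_\sigma \ge 0$, forcing $d_\sigma = 0$. Since $d_\sigma$ is defined as a $\limsup$ of nonnegative quantities, $d_\sigma = 0$ is equivalent to the existence of the limit with value $0$, which gives the chain of equalities in the statement.

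\textbf{Main obstacle.} The delicate part is the sufficiency direction: I must justify rigorously that each truncated operator $T_{\sigma_N}$ is compact (finite-rank is cleanest, but one should be careful that multiplication by $\sigma(\cdot,k)$ does not preserve a fixed finite-dimensional space — so compactness of $T_{\sigma_N}$ itself needs the smoothness of $\sigma(\cdot,k)$, e.g. via rapid decay of its Fourier coefficients making it a limit of finite-rank operators), and that the tail estimate $\norm{T_\sigma - T_{\sigma_N}}\to 0$ genuinely follows from $d_\sigma = 0$ together with uniform control of the higher seminorms. The cleanest route is to split: write $\sigma - \sigma_N = \sigma\cdot\chi_N$ where $\chi_N$ is a smooth cutoff in the $k$-variable equal to $1$ for $|k|>2N$ and $0$ for $|k|\le N$, verify $\sigma\chi_N\in S^0_{1,0}$ with seminorms that do not blow up, and invoke an $L^2$ Calderón–Vaillancourt-type bound in which the leading term is $\sup_{x,k}|\sigma\chi_N(x,k)| \le \sup_{|k|>N}\sup_x|\sigma(x,k)|\to 0$; controlling the remaining (derivative) terms uniformly in $N$ is where the care is needed, and one typically absorbs them using that $\langle k\rangle^{-1}\to 0$ as $|k|\to\infty$ on the support of $\chi_N$.
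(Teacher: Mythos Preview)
Your necessity argument matches the paper's exactly. For sufficiency the paper uses a different decomposition: rather than truncating in $k$, it expands $\sigma(\cdot,k)$ as a Fourier series in $x$ and writes $T_\sigma=\sum_{m\in\Z}A_mT_{\widehat\sigma_m}$, where $A_m$ is multiplication by $e^{ixm}$ and $T_{\widehat\sigma_m}$ is the Fourier multiplier with symbol $k\mapsto\widehat\sigma(m,k)$. Each such multiplier is compact because its symbol is dominated by $\sup_x|\sigma(x,k)|\to0$, and operator-norm convergence of the series comes from $\sum_m\sup_k|\widehat\sigma(m,k)|<\infty$, a consequence of the $x$-smoothness alone. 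The paper stresses this $m$-decomposition because it is the device later reused (with the closed ideal of inessential operators in place of the compacts) for the $L^p$ Riesz extension.

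On your route: first, your worry about $T_{\sigma_N}$ is unnecessary --- its range lies in $\mathrm{span}\{\sigma(\cdot,k)e_k:|k|\le N\}$, a space of dimension at most $2N+1$, so it is genuinely finite-rank regardless of smoothness. Second, your tail estimate has a gap as written. The standard toroidal $L^2$ bounds of Calder\'on--Vaillancourt type involve seminorms $\sup_{x,k}|D_x^\beta\tau(x,k)|$ with $\beta\ge1$ and \emph{no} accompanying $\Delta_k$; for $\tau=\sigma\chi_N$ these equal $\sup_{|k|\ge N,\,x}|D_x^\beta\sigma(x,k)|$, and the $S^0_{1,0}$ estimates give them no $k$-decay, so your ``absorption by $\langle k\rangle^{-1}$'' does not touch them. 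These terms do in fact tend to zero, but by a mechanism you did not invoke: Landau--Kolmogorov interpolation between $\sup_x|\sigma(x,k)|\to0$ and $\sup_x|D_x^{2\beta}\sigma(x,k)|\le C_{2\beta}$ forces $\sup_x|D_x^\beta\sigma(x,k)|\to0$. Alternatively one can drop CV altogether and use the bound $\|T_\tau\|\le\sum_m\sup_k|\widehat\tau(m,k)|$ (which needs no $k$-regularity, so even the sharp cutoff works), obtaining $\|T_\sigma-T_{\sigma_N}\|\le\sum_m\sup_{|k|>N}|\widehat\sigma(m,k)|\to0$ by dominated convergence --- which is exactly the paper's estimate, so the two approaches meet at the same point.
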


\begin{proof}
Assume that $d_\sigma =0$ and let $f \in C^2 (\T)$. For all $x \in \T$

\begin{align*}
    (T_\sigma f) (x) &= \sum_{k \in \Z} \sigma(x , k) \widehat{f}(k) e^{i x \cdot k} \\ &= \sum_{k \in \Z} \Big( \sum_{m \in \Z} \widehat{\sigma} (m , k) e^{i x \cdot m}\Big) \widehat{f} (k) e^{i x \cdot k} \\
    &= \sum_{m \in \Z} e^{i x \cdot m} \Big( \sum_{k \in \Z} \widehat{\sigma} (m , k) \widehat{f} (k) e^{i x \cdot k} \Big)  \\
    &= \sum_{m \in \Z}  e^{i x \cdot m} (T_{\widehat{\sigma}_m} f) (x),
\end{align*}where $\widehat{\sigma}_m (k) := \widehat{\sigma}(m,k)$ and the change in the order of summation is justified by Fubini–Tonelli's theorem since 

\begin{align*}
    \sum_{k \in \Z} \sum_{m \in \Z} |\widehat{\sigma} (m , k)| |\widehat{f}(k)| &= \sum_{k \in \Z} \norm{\widehat{\sigma}(\cdot , k)}_{L^1 (\Z)}  \cdot |\widehat{f}(k) |\\
    & \leq C_{0,2,0,0,0} \sum_{m \in \Z} \langle m \rangle^{-2} \cdot \sum_{k \in \Z}  |\widehat{f}(k) | < \infty .
\end{align*}

By defining the operator $(A_m f) (x) := e^{i x \cdot m} f (x)$, a multiplication operator, we have

$$(T_\sigma f) (x) = \sum_{m \in \Z}  (A_m  T_{\widehat{\sigma}_m} f) (x),$$
and clearly $A_m \in \mathcal{L} (L^2(\T))$. Now, for each $m \in \Z$, the operator $T_{\widehat{\sigma}_m}$ is a Fourier multiplier. As it is well known, a  pseudo-differential operator with symbol depending only on the Fourier variable is a Fourier multiplier which is a compact operator in $L^2 (\T)$ if and only if its symbol $\eta$ satisfy 

    $$\lim_{|k| \to \infty} |\eta (k)|=0.$$

Now, for each $m \in \Z$ we have that
$$
    \lim_{|k| \to \infty} |\widehat{\sigma} (m,k)| \leq \lim_{|k| \to \infty} \sup_{x \in \T} |\sigma (x , k)| \leq \limsup_{|k| \to \infty} \sup_{x \in \T} |\sigma (x,k)|= 0.$$

This implies that each operator $T_{\widehat{\sigma}_m}$ is a compact operator. As a consequence each  $A_m  T_{\widehat{\sigma}_m}$ is compact and for all $N \in \N$ , the operator
$$\displaystyle \sum_{|m| \leq N} A_m  T_{\widehat{\sigma}_m},$$
is also compact since the set of  compact operators $\mathfrak{K} (L^2 (\T))$ form a two sided ideal in $\mathcal{L} (L^2 (\T))$ (see \cite[Proposition 4.3.4]{vitali}) and this ideal of compact operators is a closed subset of $\mathcal{L} (L^2(\T))$ in the operator norm topology. For this reason, if the series

\begin{align*}
     \sum_{m \in \Z}  A_m T_{\widehat{\sigma}_m},
\end{align*}converge in the operator norm topology then 

\begin{align*}
    T_\sigma = \lim_{N \to \infty} \sum_{|k| \leq N} A_m T_{\widehat{\sigma}_m},
\end{align*}is compact as  it is the limit of a sequence of compact operators. 
 
 \begin{rem}This argument is the most important part of the proof because it shows that it is not necessary to appeal to inner product or Calkin's algebra to characterise compact operators. Moreover, it exposes, similar to Schatten classes \cite{schaten1DELGADO2014779, schaten2}, a relationship between operator ideals, approximation properties and pseudo-differential operators. In fact Theorem 2.1, seen from the perspective of operator ideals theory, is a characterisation of belonging to a certain closed ideal in terms of the symbol. This idea will be exploited later.
 \end{rem}

To conclude our proof, it is well known (See \cite[Proposition 2.4]{Molahajloo2010}) that if $\sigma \in S^0_{1,0} (\T \times \Z)$ then 

$$\sum_{m \in \Z} \norm{A_m  T_{\widehat{\sigma}_m}}_{\mathcal{L}(L^2(\T))} =  \sum_{m \in \Z} \norm{T_{\widehat{\sigma}_m}}_{\mathcal{L}(L^2(\T))} < \infty.$$

In summary, $T_\sigma$ is a compact operator. Now, assume that $d_\sigma \neq 0$. We need only to show that $T_\sigma$ is not compact on $L^2 (\T)$. Suppose that $T_\sigma$ is compact. If we set $T_\sigma = K$ in Theorem 2.1 then it contradicts our assumption that $d_\sigma \neq 0$.
\end{proof}

\subsection{Riesz Periodic Fourier Multipliers}
The previous proof of Theorem \ref{Molah} essentially uses two facts:  First,  Gohber's lemma which allow us to proof one  implication. Second, the set  of compact operators in $\mathcal{L} (L^2(\T))$ is a closed operator ideal. 
 In the case $L^p (\T)$ $1<p<\infty$ we will use the same arguments as in Theorem \ref{Molah} but, instead of the ideal of compact operators, we will use its radical.

We want to recall some basic definitions for the reader.
\begin{defi}
{\normalfont Let $\mathcal{L}$ be the collection of all bounded operators defined on Banach spaces. That is

$$\mathcal{L} := \bigcup_{E,F \text{ Banach Spaces}} \mathcal{L} (E,F).$$

An operator ideal is a sub-collection $\mathfrak{I}$ of $\mathcal{L}$ such that each one of its components

$$\mathfrak{I} (E,F) := \mathfrak{I} \cap \mathcal{L}, $$
satisfy the following conditions:

\begin{enumerate}
    \item[(i)] For each one-dimensional Banach space $E$ the identity map $I_E$ belongs to $\mathfrak{I}(E)$.
    \item[(ii)] If $T_1 , T_2 \in \mathfrak{I} (E,F)$ then $T_1 + T_2 \in \mathfrak{I}(E,F)$.
    \item[(iii)] If $T \in \mathfrak{I}(E,F)$ and $X\in \mathcal{L}(F,F_0) , Y \in \mathcal{L} (E_0 , E)$ then their composition $X  T Y \in \mathfrak{I} (E_0 , F_0)$, where $E_0 $ and $F_0$ are Banach spaces.
    \end{enumerate}
    
If additionally each component $\mathfrak{I} (E,F)$ is closed in the operator norm topology of $\mathcal{L} (E,F)$ then it is said that $\mathfrak{I}$ is a closed operator ideal.}     
\end{defi}
 Given a operator ideal $\mathfrak{I}$ it is said that $S \in \mathcal{L} (E,F)$ belongs to the radical of $\mathfrak{I}$ (see \cite[4.3.1]{pietsch2}) if for all $L \in \mathcal{L} (F,E)$ there exist $U \in \mathcal{L} (E)$ and $X \in \mathfrak{I} (E)$ such that $$U(I_E - LS) = I_E -X.$$ The collection of operators that satisfy this condition is denoted by $\mathfrak{I}^{rad}$. It can be proved that $\mathfrak{I}^{rad}$ is a closed operator ideal so, to take advantage of this fact, we will work from now on with bounded operators in the radical of an special ideal. The radical of compact operators ideal is called the ideal of Gohberg or inessential operators, and is completely characterised by  \cite[Theorem 26.7.2]{pietsch2}.

\begin{teo}[Pietsch]
$\mathfrak{R} := \mathfrak{K}^{rad}$ is the largest operator ideal such that all components $\mathfrak{R} (E)$ consists of Riesz operators only.
\end{teo}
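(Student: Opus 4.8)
The statement recorded here is Pietsch's, and to reprove it I would reduce everything to Fredholm theory. The plan has two ingredients. First, unwind the definition of the radical: saying that for every $L\in\mathcal{L}(F,E)$ there exist $U\in\mathcal{L}(E)$ and $X\in\mathfrak{K}(E)$ with $U(I_E-LS)=I_E-X$ means precisely that $I_E-LS$ is invertible modulo the ideal $\mathfrak{K}(E)$ for every such $L$, and by Atkinson's theorem invertibility modulo the compacts is the same as being Fredholm. Thus I would first establish the working characterisation
$$S\in\mathfrak{R}(E,F)=\mathfrak{K}^{rad}(E,F)\quad\Longleftrightarrow\quad I_E-LS\text{ is Fredholm for every }L\in\mathcal{L}(F,E).$$
Second, I would invoke the classical description of Riesz operators: $T\in\mathcal{L}(E)$ is a Riesz operator if and only if $\lambda I_E-T$ is Fredholm for every $\lambda\in\mathbb{C}\setminus\{0\}$. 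With these two facts in hand both halves of the theorem become short.

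For the first half — that every component $\mathfrak{R}(E)$ consists of Riesz operators only — take $T\in\mathfrak{R}(E)$ and $\lambda\neq 0$, and apply the characterisation above with $L:=\lambda^{-1}I_E\in\mathcal{L}(E)$. It gives that $I_E-\lambda^{-1}T$, hence $\lambda I_E-T$, is Fredholm; since $\lambda\neq 0$ was arbitrary, $T$ is a Riesz operator. (If one wants the Riesz--Schauder spectral picture explicitly, one adds that $\lambda\mapsto\mathrm{ind}(\lambda I_E-T)$ is locally constant on the connected set $\mathbb{C}\setminus\{0\}$ and equals $0$ for $|\lambda|>\norm{T}$, hence vanishes identically, and then applies analytic Fredholm theory to the resolvent.) Since $\mathfrak{R}=\mathfrak{K}^{rad}$ is itself a closed operator ideal, as recalled in the text, this exhibits $\mathfrak{R}$ as one of the competitors in the statement.

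For maximality, let $\mathfrak{I}$ be an arbitrary operator ideal all of whose components $\mathfrak{I}(E)$ consist of Riesz operators; I must show $\mathfrak{I}\subseteq\mathfrak{R}$. Given $S\in\mathfrak{I}(E,F)$ and an arbitrary $L\in\mathcal{L}(F,E)$, the ideal property (iii) forces $LS\in\mathfrak{I}(E)$, so $LS$ is a Riesz operator on $E$; taking $\lambda=1$ in the Riesz characterisation shows $I_E-LS$ is Fredholm. As $L$ ranges over all of $\mathcal{L}(F,E)$ this is exactly the Fredholm characterisation of the radical, so $S\in\mathfrak{K}^{rad}(E,F)=\mathfrak{R}(E,F)$. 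Hence $\mathfrak{I}\subseteq\mathfrak{R}$, and combined with the previous paragraph this proves $\mathfrak{R}$ is the largest operator ideal with the stated property.

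The step I expect to demand the most care is the reduction in the first paragraph. The defining relation of the radical only provides a \emph{one-sided} factorisation $U(I_E-LS)=I_E-X$, which by itself yields an upper semi-Fredholm operator rather than a genuinely Fredholm one; promoting this to two-sided invertibility modulo $\mathfrak{K}(E)$ (equivalently, to the full Fredholm property, hence to the symmetric spectral behaviour of Riesz operators) is where one must lean on the structural theory of the radical of an operator ideal — the Jacobson-radical-type arguments in Pietsch's treatment showing that the factorisation can be arranged on both sides — together with Atkinson's theorem. The index computation and the classical equivalences for Riesz operators used above are standard and may simply be cited.
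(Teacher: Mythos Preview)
The paper does not supply its own proof of this theorem: it is simply quoted from Pietsch (cited there as \cite[Theorem 26.7.2]{pietsch2}), so there is no argument in the text to compare against. Your outline is therefore being measured against the original source, and it is essentially the standard route: identify $\mathfrak{K}^{rad}$ with the class of inessential operators via Atkinson's theorem, then use the Fredholm characterisation of Riesz operators (item (v) of Lemma~2.1 in the paper, which should read ``for all $\lambda\neq 0$'') to pass in both directions. The maximality step---pushing $S\in\mathfrak{I}(E,F)$ to $LS\in\mathfrak{I}(E)$ via the ideal property and then applying the Riesz hypothesis at $\lambda=1$---is exactly right and is the clean part of the argument.

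You have correctly isolated the only genuine difficulty: the radical, as defined here, gives a \emph{left} inverse of $I_E-LS$ modulo compacts, which a priori yields only upper semi-Fredholm. Closing this gap is where Pietsch's structural work on radicals enters; one shows the one-sided definition is automatically two-sided (the analogue of left/right symmetry of the Jacobson radical for operator ideals), or equivalently that $\mathfrak{K}^{rad}$ coincides with the class of inessential operators in the sense of Kleinecke. You flag this honestly rather than gloss over it, which is the right call. If you want to make the write-up self-contained, the cleanest patch is to cite directly that $S$ is inessential iff $I_E-LS$ is Fredholm for every $L$ (this is a known theorem, not a definition), and then your two short paragraphs go through without further comment.
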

 
Riesz operators are a generalisation of compact operators that has been widely studied (\cite{calkin, pietsch, West, Ruston} and for that reason there are many equivalent definitions. One can use, for example, the definition given in \cite{pietsch}.

\begin{defi}
{\normalfont Let $E$ be a Banach space. It is said that $T \in  \mathcal{L} (E)$ is a {\em Riesz operator} if for all $\varepsilon > 0$ exist an exponent $s$ and points $v_1 ,..., v_h \in E$ which depend on $\varepsilon$ such that

$$T^s (B_E) \subseteq \bigcup_{j = 1}^{h} v_j + \varepsilon^s B_E,$$ where $B_E$ is the unit ball of $E$.}
\end{defi}

This definition does not provides too much information at first sight, but fortunately one can think of Riesz operators in a more intuitive way. In simple terms a bounded linear operator is a Riesz operator if its spectrum is countable, it contains  only eigenvalues, each one of these has finite-dimensional associated eigen-space, and the number zero is the only possible accumulation point. In other words, Riesz operators are those bounded linear operators whose spectrum behaves like the spectrum of compact operators. They are generally easier to study than the compact operators because of their different characterisations. Several necessary and  sufficient conditions to be a Riesz operator are listed in the following lemma, which is the collection of Propositions 3.2.13 and 3.2.24 in \cite{pietsch}, 26.5.1 in \cite{pietsch2}, and the corollary of Theorem 3.4.3 in \cite{calkin}. 
\begin{lema}
Let $E$ be a Banach space and suppose $T \in \mathcal{L} (E)$. The following statements are equivalent
 
\vspace{2mm}
\begin{enumerate}
\item[(i)] $T$ is a Riesz operator.
\item[(ii)] $T^m$ is Riesz for some (for all) exponent $m$.
\item[(iii)] $\lambda T$ is iterative compact for all $\lambda \in \C$.
\item[(iv)] Non-zero points in $Spec(T)$ are isolated eigenvalues with finite-dimensional associated eigenspace and zero as the only cluster point. If $E$ is infinite dimensional then $0 \in Spec (T)$. 
\item[(v)] $T- \lambda I$ is a Fredholm operator for all $\lambda \in \C$.
\end{enumerate}
\end{lema}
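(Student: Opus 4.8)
My plan is to present the five conditions as different faces of a single fact about the Calkin algebra, exactly along the lines of \cite{pietsch, pietsch2, calkin}, so that in the paper the ``proof'' is essentially a guided citation. First I would introduce the (ball) measure of non-compactness $\gamma$ on $\mathcal{L}(E)$: $\gamma(S)$ is the infimum of the $\varepsilon>0$ for which $S(B_E)$ can be covered by finitely many $\varepsilon$-balls. I would record the standard facts that $\gamma$ is a submultiplicative semi-norm vanishing precisely on $\mathfrak{K}(E)$, so it descends to an algebra norm on the Calkin algebra $\mathcal{L}(E)/\mathfrak{K}(E)$ equivalent to the quotient norm. Reading Definition 2.4 through $\gamma$, the covering condition $T^s(B_E)\subseteq\bigcup_j v_j+\varepsilon^s B_E$ says exactly $\gamma(T^s)^{1/s}\le\varepsilon$; since $n\mapsto\log\gamma(T^n)$ is subadditive, Fekete's lemma gives that $\gamma(T^n)^{1/n}$ converges to $r_\gamma(T):=\inf_n\gamma(T^n)^{1/n}$, and condition $(i)$ collapses to the single identity $r_\gamma(T)=0$, i.e.\ the coset of $T$ is quasi-nilpotent in the Calkin algebra. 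Everything else should be read off from this.

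Next I would dispatch the easy equivalences. Homogeneity $\gamma(\lambda S)=|\lambda|\gamma(S)$ gives $r_\gamma(\lambda T)=|\lambda|\,r_\gamma(T)$, so $r_\gamma(T)=0$ iff $r_\gamma(\lambda T)=0$ for every $\lambda$, which, unwinding whatever precise meaning \cite{calkin} attaches to ``iterative compact'' (again a condition homogeneous in $\lambda$), is $(i)\Leftrightarrow(iii)$; submultiplicativity together with the reverse inequality along powers gives $r_\gamma(T^m)=r_\gamma(T)^m$, hence $(i)\Leftrightarrow(ii)$. For $(v)$ I would quote Atkinson's theorem: $T-\lambda I$ is Fredholm iff its image in the Calkin algebra is invertible; so ``$T-\lambda I$ Fredholm for all $\lambda\neq0$'' says that the spectrum of the coset of $T$ is $\{0\}$, again $r_\gamma(T)=0$. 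The apparent clash with ``$0\in Spec(T)$'' in $(iv)$ is only cosmetic: the real content there is that an infinite-dimensional Riesz operator has $0$ as a cluster point of its spectrum, hence $0\in Spec(T)$, whereas a Fredholm operator on an infinite-dimensional space is never Riesz, so $(v)$ is effectively a statement about $\lambda\neq0$.

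The substantial part is the link with the spectral picture $(iv)$. Starting from $(i)$/$(v)$: for each $\lambda\neq0$ the operator $T-\lambda I$ is Fredholm, and since $\lambda\mapsto T-\lambda I$ is entire and invertible for $|\lambda|>\norm{T}$, its Fredholm index is locally constant and equal to $0$ throughout $\C\setminus\{0\}$, so the analytic Fredholm theorem forces the non-invertibility set to be discrete in $\C\setminus\{0\}$, with each such $\lambda$ an eigenvalue whose Riesz spectral subspace $P_\lambda E$, $P_\lambda=\frac{1}{2\pi i}\oint(\mu I-T)^{-1}\,d\mu$, is finite-dimensional, and with $0$ the only possible accumulation point; this is $(iv)$. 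Conversely, from $(iv)$ the accumulation points of $Spec(T)$ together with the eigenvalues of infinite multiplicity (the Browder essential spectrum $\sigma_{ess}(T)$) lie in $\{0\}$, and Nussbaum's formula $r_\gamma(T)=\sup\{|\mu|:\mu\in\sigma_{ess}(T)\}$ then yields $r_\gamma(T)=0$, which is $(i)$.

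I expect $(iv)\Rightarrow(i)$ to be the real obstacle: it needs either the identity $r_\gamma(T)=r_{ess}(T)$ (Nussbaum) or, avoiding it, an explicit argument showing that splitting off the Riesz projections attached to the finitely many eigenvalues with $|\lambda|\ge\varepsilon$ makes $\gamma$ of the complementary piece small, so that $\gamma(T^n)^{1/n}\to0$; the remaining four implications are formal once one has the algebra structure on $\mathcal{L}(E)/\mathfrak{K}(E)$ and Atkinson's theorem. Since all of this is carried out in full in \cite[3.2.13 and 3.2.24]{pietsch}, \cite[26.5.1]{pietsch2} and \cite[Corollary of Theorem 3.4.3]{calkin}, in the paper it is enough to present the above chain of reductions and defer the details to those references.
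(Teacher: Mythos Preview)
The paper does not actually prove this lemma: the sentence immediately preceding it says it is ``the collection of Propositions 3.2.13 and 3.2.24 in \cite{pietsch}, 26.5.1 in \cite{pietsch2}, and the corollary of Theorem 3.4.3 in \cite{calkin}'', and no argument is given. Your proposal therefore goes beyond what the paper does, and the route you take --- rewrite Definition~2.4 as $r_\gamma(T)=0$ via the ball measure of non-compactness, then read off (ii), (iii), (v) from homogeneity, submultiplicativity and Atkinson's theorem, and link (iv) through analytic Fredholm theory in one direction and Nussbaum's formula $r_\gamma(T)=r_{ess}(T)$ in the other --- is precisely the standard argument carried out in those references. In that sense your sketch is correct and faithful to the sources the paper defers to.

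Two small corrections are worth making before you write this up. First, in handling the tension between (iv) and (v) at $\lambda=0$ you say that ``an infinite-dimensional Riesz operator has $0$ as a cluster point of its spectrum''; this is false (take any nilpotent compact operator, whose spectrum is exactly $\{0\}$). The correct statement, which is what (iv) actually asserts, is only that $0\in Spec(T)$ when $\dim E=\infty$, and the clean reason is the one already implicit in your setup: if $T$ were invertible then $1=\gamma(I)\le\norm{T^{-1}}^n\gamma(T^n)$ would force $r_\gamma(T)\ge\norm{T^{-1}}^{-1}>0$. Second, you are right that (v) as literally printed (``for all $\lambda\in\C$'') cannot coexist with $0\in Spec(T)$ in infinite dimensions, since a Riesz operator then has essential spectrum $\{0\}$ and hence $T-0\cdot I=T$ is \emph{not} Fredholm; rather than calling this ``cosmetic'' you should state plainly that (v) must be read as ``for all $\lambda\neq 0$'', which is indeed how the cited references formulate it.
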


In general, Riesz operators do not form an operator ideal. However, what we want to show now is that in certain subalgebras of $\mathcal{L} (L^p(\T))$, $1<p<\infty$, there are ideals that only contain Riesz operators, and this will be a crucial fact in this work. The following two propositions are the first step in that direction.
\esp
\vspace{4mm}
\esp

\begin{pro}
Let $\sigma : \Z \to \C$ be a function such that
\begin{align}
    |\Delta_k^t \sigma (k)| \leq C \langle k \rangle^{-t} \esp , \esp k\in \Z
\end{align}
for $0 \leq t \leq [1/2] + 1$. Then for $1<p< \infty$ the spectrum of its associated Fourier multiplier $T_\sigma$ as an operator in $\mathcal{L} (L^p (\T))$ is:
$$Spec(T_\sigma)=\overline{ \{\sigma (k) \esp : \esp k \in \Z\}}$$
\end{pro}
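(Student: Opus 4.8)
The plan is to exploit the Fourier transform $\mathcal{F}: L^p(\T) \to \ell^?$ in the guise that turns the Fourier multiplier $T_\sigma$ into a diagonal (multiplication) operator, and then to argue that the spectrum is governed by the closure of the diagonal entries. Concretely, if one knew $T_\sigma$ were unitarily equivalent to the multiplication operator $M_\sigma$ on some sequence space indexed by $\Z$, the claim $Spec(T_\sigma) = \overline{\{\sigma(k) : k \in \Z\}}$ would be the standard description of the spectrum of a diagonal operator. For $p \neq 2$ this is not literally available, so instead I would proceed through the resolvent directly: for $\lambda \notin \overline{\{\sigma(k) : k \in \Z\}}$ the function $k \mapsto (\sigma(k) - \lambda)^{-1}$ is well-defined and bounded, and the whole point is to show it is again an admissible symbol, i.e. that it satisfies a difference estimate of the form $(2)$ so that Proposition 2.1 applies to guarantee $(T_\sigma - \lambda I)^{-1} = T_{(\sigma - \lambda)^{-1}}$ is bounded on $L^p(\T)$. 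This gives $\overline{\{\sigma(k) : k\in\Z\}} \supseteq Spec(T_\sigma)$.

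The first key step, therefore, is to verify that the finite-difference quantities $\Delta_k^t\big((\sigma(k) - \lambda)^{-1}\big)$ inherit the decay $\langle k \rangle^{-t}$ for $0 \le t \le [1/2]+1$ from the hypothesis $(2)$ on $\sigma$, using that $|\sigma(k) - \lambda| \ge \mathrm{dist}(\lambda, \overline{\{\sigma(k)\}}) > 0$ uniformly in $k$. This is a discrete Leibniz/quotient-rule computation: one expands $\Delta_k^t$ of a reciprocal via the discrete product rule (Proposition 2.1's composition formula is the continuous analogue), controls each factor $\Delta_k^{t_j}\sigma(k)$ by $C\langle k\rangle^{-t_j}$, and uses the uniform lower bound on $|\sigma - \lambda|$ for the denominators; since only finitely many orders $t \le [1/2]+1$ are involved, no delicate bookkeeping is needed. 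Then invoke the Fourier-multiplier boundedness theorem implicit in the hypothesis of the proposition (condition $(2)$ is exactly a toroidal analogue of the Mikhlin–Hörmander condition ensuring $L^p$-boundedness) to conclude that $T_{(\sigma-\lambda)^{-1}}$ is bounded on $L^p(\T)$; a direct check shows it is the two-sided inverse of $T_\sigma - \lambda I$, since Fourier multipliers compose by multiplying symbols on $\mathcal{P}(\T)$ and both operators are bounded. Hence $\lambda \in Res(T_\sigma)$.

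For the reverse inclusion $\overline{\{\sigma(k) : k \in \Z\}} \subseteq Spec(T_\sigma)$, I would show each $\sigma(k_0)$ lies in the spectrum and then use that the spectrum is closed. If $\lambda = \sigma(k_0)$ then $T_\sigma - \lambda I$ kills the exponential $e_{k_0}$, so $\lambda$ is an eigenvalue; more generally, for $\lambda$ a limit point of the $\sigma(k)$'s one builds an approximate eigenvector (a normalized $e_{k_j}$ with $\sigma(k_j) \to \lambda$) showing $T_\sigma - \lambda I$ is not bounded below, hence not invertible. Combining the two inclusions, and using that the spectrum of a bounded operator is always closed, yields $Spec(T_\sigma) = \overline{\{\sigma(k) : k \in \Z\}}$.

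The main obstacle I anticipate is the passage from the hypothesis $(2)$ to $L^p$-boundedness of $T_{(\sigma - \lambda)^{-1}}$: one must be sure that condition $(2)$ with the specific range $0 \le t \le [1/2]+1$ is genuinely a sufficient condition for periodic Fourier multipliers on $L^p(\T)$ (this is the toroidal Mikhlin multiplier theorem, and the exponent $[1/2]+1 = 1$ reflects the one-dimensional setting), and that the reciprocal symbol stays in that class with the dependence on $\lambda$ only through $\mathrm{dist}(\lambda, \overline{\{\sigma(k)\}})$. Everything else — the discrete Leibniz estimate, the eigenvector construction, closedness of the spectrum — is routine once that multiplier theorem is in hand.
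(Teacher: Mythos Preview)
Your proposal is correct and follows essentially the same route as the paper: show that for $\lambda \notin \overline{\{\sigma(k)\}}$ the reciprocal symbol $(\sigma-\lambda)^{-1}$ again satisfies the Mikhlin-type difference condition (using the uniform lower bound $|\sigma(k)-\lambda|\ge d>0$), invoke the toroidal Mikhlin multiplier theorem to get a bounded inverse, and for the reverse inclusion observe that each $\sigma(k)$ is an eigenvalue with eigenfunction $e_k$ and that the spectrum is closed. The approximate-eigenvector argument for limit points is unnecessary (closedness of the spectrum already handles it), but otherwise your outline matches the paper's proof.
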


\begin{proof}
First, the condition $(1)$ guarantees that $T_\sigma$ satisfy the conditions of the toroidal version of Mikhlin multiplier Theorem (see \cite{wongdfa}, Lemma 22.12) then $T_\sigma \in \mathcal{L} (L^p(\T))$. Define
$$\sigma_\lambda (k):= \sigma(k) - \lambda \esp \esp  \text{and} \esp  \esp \sigma_\lambda^{-1} (k) := \frac{1}{\sigma_\lambda (k)}$$for $\lambda \in \C \setminus \overline{ \{\sigma (k) \esp : \esp k \in \Z\}} $. Since $\C$ is a regular topological space, there exist a $d>0$ such that

\begin{align*}
    \inf_{k \in \Z} |\sigma(k) - \lambda| = d.
\end{align*}

Then $T_{\sigma_\lambda^{-1}}$ also satisfy the conditions of Mikhlin multiplier theorem because 

\begin{align*}
    | \Delta_k \sigma_\lambda^{-1} (k) |&= \Big| \frac{1}{\sigma_\lambda (k+1)} - \frac{1}{\sigma_\lambda (k)}\Big| \\ &= \Big| \frac{\sigma (k+1) - \sigma (k)}{\sigma_\lambda (k+1) \sigma_\lambda (k)} \Big| \leq \frac{C}{d^2} \langle k \rangle^{-1}.
    \end{align*}
    
Finally, it is clear that
$$(T_\sigma - \lambda I) (T_{\sigma_\lambda^{-1}}) = (T_{\sigma_\lambda}) (T_{\sigma_\lambda^{-1}}) =I,$$which proves that $Spec(T) \subseteq \overline{ \{\sigma (k) \esp : \esp k \in \Z\}}$. Now its easy to see that $\overline{ \{\sigma (k) \esp : \esp k \in \Z\}} \subseteq Spec(T)$ because the spectrum is closed and each number $\sigma (k)$ is an eigenvalue of $T_\sigma$ with associated eigenvector $e^{i x \cdot k}$, thus
$$\{\sigma (k) \esp : \esp k \in \Z\} \subseteq Spec(T).$$
\end{proof}

As an straightforward  consequence we obtain the following. 

\begin{pro}
Let $T_\sigma : L^p (\T) \to L^p (\T)$ $1<p<\infty$ be a Fourier multiplier whose symbol satisfy the condition $(1)$. Then $T_\sigma$ is a Riesz operator if and only if

$$\lim_{|k| \to \infty} |\sigma (k)| = 0.$$
\end{pro}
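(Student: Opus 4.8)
The plan is to read off everything from Proposition 2.1, which already gives $Spec(T_\sigma)=\overline{\{\sigma(k):k\in\Z\}}$ and, in particular, that $T_\sigma$ is bounded on $L^p(\T)$, and to combine this with the equivalence (i)$\Leftrightarrow$(iv) of Lemma 2.1. I would first note that condition $(1)$ with $t=0$ yields $|\sigma(k)|\le C$ for every $k$, so the set $\{\sigma(k):k\in\Z\}$ is bounded; apart from Proposition 2.1, this is the only information about $\sigma$ the argument will use.

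For the implication ``$T_\sigma$ Riesz $\Rightarrow |\sigma(k)|\to 0$'' I would argue by contradiction. If $|\sigma(k)|\not\to 0$ as $|k|\to\infty$, then there is an $\varepsilon>0$ and a sequence of distinct integers $k_j$ with $|k_j|\to\infty$ and $|\sigma(k_j)|>\varepsilon$; since $(\sigma(k_j))_j$ is bounded, after passing to a subsequence I may assume $\sigma(k_j)\to\lambda$ with $|\lambda|\ge\varepsilon>0$. As $\lambda\in\overline{\{\sigma(k):k\in\Z\}}=Spec(T_\sigma)$, I then split into two cases. If $\sigma(k_j)=\lambda$ for infinitely many $j$, the corresponding exponentials $e_{k_j}$ are linearly independent eigenvectors for the eigenvalue $\lambda$, so its eigenspace is infinite dimensional. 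Otherwise $\sigma(k_j)\ne\lambda$ for all large $j$, and then $\lambda$ is a limit of points of $Spec(T_\sigma)$ distinct from $\lambda$, hence not isolated in the spectrum. Since $\lambda\ne 0$, either alternative contradicts Lemma 2.1(iv); therefore $|\sigma(k)|\to 0$.

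For the converse, assume $\lim_{|k|\to\infty}|\sigma(k)|=0$. Then $\overline{\{\sigma(k):k\in\Z\}}=\{\sigma(k):k\in\Z\}\cup\{0\}$ is a countable compact set whose only possible cluster point is $0$; in particular $0\in Spec(T_\sigma)$, as required since $L^p(\T)$ is infinite dimensional. It remains to verify that each $\lambda\in Spec(T_\sigma)$ with $\lambda\ne 0$ is an isolated eigenvalue of finite multiplicity. Such a $\lambda$, not being a cluster point of $\{\sigma(k)\}$, must equal $\sigma(k)$ for some $k$, so it is an eigenvalue with eigenvector $e_k$; moreover $|\sigma(k)|<|\lambda|/2$ for all but finitely many $k$, so only finitely many $k$ satisfy $\sigma(k)=\lambda$ (finite multiplicity), and the ball of radius $|\lambda|/2$ about $\lambda$ meets $Spec(T_\sigma)=\{\sigma(k)\}\cup\{0\}$ in only those finitely many values, which can then be excluded by shrinking to a smaller neighbourhood (isolation). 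Hence $T_\sigma$ satisfies condition (iv) of Lemma 2.1 and is a Riesz operator.

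The argument is short, being essentially bookkeeping on top of Proposition 2.1 and Lemma 2.1. The only point that genuinely calls for care is the forward implication: one must not overlook that the offending limit value $\lambda$ might be attained by $\sigma$ infinitely often, which is exactly why the proof branches into a ``non-isolated spectral point'' case and an ``infinite-dimensional eigenspace'' case, both of which are forbidden by Lemma 2.1(iv).
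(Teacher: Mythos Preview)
Your proof is correct and follows exactly the route the paper takes: the paper's proof is the single line ``By the previous proposition and $(iv)$ in Lemma 2.1,'' and you have simply unpacked what that entails, computing $Spec(T_\sigma)=\overline{\{\sigma(k)\}}$ via Proposition 2.1 and then checking both directions against the spectral description in Lemma 2.1(iv). Your case split in the forward direction (infinite-dimensional eigenspace versus non-isolated spectral point) is the right way to make the one-line argument rigorous.
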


\begin{proof}
By the previous proposition and $(iv)$ in Lemma 2.1. 
\end{proof}
 
\begin{rem}[Algebraic considerations, part 1]
 
\esp
\begin{enumerate}
\item[(i)] The collection of Fourier multipliers
$$M_p := \{T_\sigma: L^p (\T) \to L^p (\T) \esp : \esp \sigma \esp \text{ satisfy (1)  } \esp \},$$
is a sub-algebra of $\mathcal{L}(L^p(\T))$, $1 < p<\infty$. This is a consequence of discrete Leibniz rule.

\item[(ii)] If $\mathcal{A}$ is an algebra and $\mathcal{B}$  is a sub-algebra of $\mathcal{A}$ then each ideal $\mathcal{I}$ of $\mathcal{A}$ defines an ideal of $\mathcal{B}$, this is $\mathcal{I} \cap \mathcal{B}$.

\item[(iii)] The component $\mathfrak{R} (L^p(\T))$ is the largest ideal in $\mathcal{L} (L^p (\T))$ which consists only of Riesz operators. As an immediate consequence $\mathfrak{R} (L^p(\T)) \cap M_p$ is the largest ideal on $M_p$ that consists only of Riesz operators.
\item[(iv)] The set
$$\mathfrak{R} (M_p):= \{ T_\sigma \in M_p \esp : \esp \lim_{|k| \to \infty} |\sigma (k)| = 0 \},$$
is an ideal of $M_p$, which,  in virtue of Proposition 2.2, contains only Riesz operators, so $\mathfrak{R} (M_p) \subseteq \mathfrak{R} \cap M_p$.

\item[(v)] Again, by Proposition 2.2, the symbol of every Fourier multiplier  $T_\sigma \in \mathfrak{R} \cap M_p$ satisfy $\lim_{|k| \to \infty} |\sigma (k)| = 0$, so $\mathfrak{R} \cap M_p \subseteq \mathfrak{R} (M_p)$ and in conclusion $\mathfrak{R} \cap M_p = \mathfrak{R} (M_p)$, that is, all Riesz Fourier multipliers are inessential operators.
\end{enumerate}

\end{rem}

\subsection{Riesz Periodic Pseudo-differential Operators}
We have constructed enough tools to prove the main theorem of this section. We will use the same  techniques as in Theorem 2.2 but  instead of compact operators we will use inessential operators.

\begin{teo}
Let $T_\sigma$ be a pseudo-differential operator with symbol $\sigma \in S^0_{1,0} (\T \times \Z)$ and $1<p<\infty$. Then $T_\sigma$ is a Riesz operator on $L^p (\T)$ if and only if
$$d_\sigma' := \lim_{|k| \to \infty} \{ \sup_{x \in \T} |\sigma (x,k)| \} =0.
$$
\end{teo}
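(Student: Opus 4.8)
The plan is to mimic the proof of Theorem \ref{Molah} almost verbatim, but with the closed operator ideal $\mathfrak{R}=\mathfrak{K}^{rad}$ of inessential operators (Theorem 2.3) in place of the ideal $\mathfrak{K}(L^2(\T))$ of compact operators, and with ``Riesz Fourier multiplier in $M_p$'' (Proposition 2.2 together with Remark 2.2 (v)) in place of ``compact Fourier multiplier''. The two structural facts that make the substitution work are already in the excerpt: $\mathfrak{R}(L^p(\T))$ is a \emph{closed} operator ideal, and it is precisely the largest ideal of $\mathcal{L}(L^p(\T))$ consisting only of Riesz operators, so membership in it gives Riesz-ness for free.

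For sufficiency, assume $d_\sigma'=0$. Exactly as in the proof of Theorem \ref{Molah}, expanding $\sigma(\cdot,k)$ in its Fourier series on $\T$ gives $T_\sigma=\sum_{m\in\Z}A_mT_{\widehat\sigma_m}$, where $A_m$ is multiplication by $e^{ix\cdot m}$ (an isometry of $L^p(\T)$) and $T_{\widehat\sigma_m}$ is the Fourier multiplier with symbol $\widehat\sigma_m(k):=\widehat\sigma(m,k)$. Since $\sigma\in S^0_{1,0}(\T\times\Z)$, integrating by parts twice in $x$ against the estimates defining the class shows $\sup_{k\in\Z}\langle k\rangle^{t}|\Delta_k^t\widehat\sigma_m(k)|\le C\langle m\rangle^{-2}$ for $t=0,1$; hence each $T_{\widehat\sigma_m}$ satisfies condition $(1)$, so $T_{\widehat\sigma_m}\in M_p$, and by the toroidal Mikhlin theorem $\norm{T_{\widehat\sigma_m}}_{\mathcal{L}(L^p(\T))}\le C'\langle m\rangle^{-2}$. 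This is the $L^p$-analogue of \cite[Proposition 2.4]{Molahajloo2010}, and it guarantees that the series converges absolutely in operator norm and represents $T_\sigma\in\mathcal{L}(L^p(\T))$. Moreover $|\widehat\sigma_m(k)|\le\sup_{x\in\T}|\sigma(x,k)|\to0$ as $|k|\to\infty$, so by Proposition 2.2 each $T_{\widehat\sigma_m}$ lies in $\mathfrak{R}(M_p)=\mathfrak{R}\cap M_p\subseteq\mathfrak{R}(L^p(\T))$ (Remark 2.2 (v)). Because $\mathfrak{R}$ is an operator ideal, every $A_mT_{\widehat\sigma_m}$ and every finite partial sum lies in $\mathfrak{R}(L^p(\T))$; and because $\mathfrak{R}$ is closed and the series converges in norm, $T_\sigma\in\mathfrak{R}(L^p(\T))$, so $T_\sigma$ is a Riesz operator by Theorem 2.3.

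For necessity, suppose $d_\sigma'\ne0$, i.e.\ $d_\sigma=\limsup_{|k|\to\infty}\sup_{x\in\T}|\sigma(x,k)|>0$; I would show $T_\sigma$ is not Riesz. For each $n\in\N$, the toroidal composition formula (Proposition 1.1) shows $T_\sigma^n$ is a pseudo-differential operator whose symbol lies in $S^0_{1,0}(\T\times\Z)$ and equals $\sigma(x,k)^n$ modulo $S^{-1}_{1,0}(\T\times\Z)$; since symbols of negative order vanish uniformly in $x$ as $|k|\to\infty$, the Gohberg constant of $T_\sigma^n$ is $d_{\sigma^n}=\limsup_{|k|\to\infty}\big(\sup_{x\in\T}|\sigma(x,k)|\big)^{n}=d_\sigma^{\,n}$. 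Applying Gohberg's Lemma (Theorem 2.1) to $T_\sigma^n$ gives $\norm{T_\sigma^n-K}_{\mathcal{L}(L^p(\T))}\ge d_\sigma^{\,n}$ for every compact $K$, hence $\mathrm{dist}\big(T_\sigma^n,\mathfrak{K}(L^p(\T))\big)^{1/n}\ge d_\sigma$ for all $n$. But if $T_\sigma$ were Riesz it would be iterative compact (Lemma 2.1 (iii), the West--Ruston characterisation, \cite{West, Ruston}), forcing $\mathrm{dist}(T_\sigma^n,\mathfrak{K}(L^p(\T)))^{1/n}\to0$ as $n\to\infty$ — a contradiction. Therefore $T_\sigma$ is not Riesz.

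The hard part will be the necessity: unlike the compact case of Theorem \ref{Molah}, one cannot finish by merely substituting $K=T_\sigma$ into Gohberg's Lemma, because a Riesz operator need not be near the compacts. The workable idea is to test Gohberg's Lemma against all powers $T_\sigma^n$, using that the $\Psi$DO calculus keeps them of order $0$ with principal symbol $\sigma^n$, and to combine this with the characterisation of Riesz operators via the vanishing of the asymptotic distance to the compacts (equivalently, of the essential spectral radius). A secondary technical check, needed only for sufficiency, is the summability in $m$ of the Mikhlin constants of $T_{\widehat\sigma_m}$; this is the $L^p$ version of \cite[Proposition 2.4]{Molahajloo2010}, obtained by integrating by parts in the toroidal variable against the $S^0_{1,0}$ estimates.
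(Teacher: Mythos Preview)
Your proof is correct and follows the same overall strategy as the paper: for sufficiency, the decomposition $T_\sigma=\sum_m A_m T_{\widehat\sigma_m}$ combined with Proposition~2.2, Remark~2.2(v), and the closedness of the ideal $\mathfrak{R}(L^p(\T))$; for necessity, Gohberg's lemma applied to powers together with the toroidal composition formula. Two organisational differences are worth noting. First, the paper passes through an intermediate equivalence ($T_\sigma$ Riesz $\Leftrightarrow d_{\sigma^n}'=0$ for some $n$, and separately $d_{\sigma^n}'=0$ for some $n \Leftrightarrow d_\sigma'=0$), applying the multiplier decomposition to $T_{\sigma^n}$ and then invoking Lemma~2.1(ii) to descend to $T_\sigma$; you apply the decomposition directly to $T_\sigma$, which is shorter. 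Second, and more substantively, for necessity the paper reads Lemma~2.1(iii) as producing an actual compact power $(T_\sigma)^h$ and sets $K=(T_\sigma)^h$ in Gohberg's lemma; you instead bound $\mathrm{dist}(T_\sigma^n,\mathfrak K)^{1/n}\ge d_\sigma$ for every $n$ and contradict the Ruston--West characterisation via vanishing essential spectral radius. Your version is the safer one here, since ``iteratively compact'' in the sense of \cite{pietsch} only yields $\mathrm{dist}(T_\sigma^n,\mathfrak K)^{1/n}\to 0$, not that any single power actually lies in $\mathfrak K$.
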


\begin{proof}
We assert  that a pseudo-differential operator with  symbol in $S^0_{1,0} (\T \times \Z)$ is a Riesz operator if and only if $d_{\sigma^n}' = 0$ for some $n \in \N$, where $\sigma^n$ denotes the symbol of $(T_\sigma)^n$. Then to finish we will show that $d_{\sigma^n}' = 0$ if and only if $d_\sigma' = 0$.

\esp
First, suppose that $T_\sigma$ is a Riesz operator and $d_{\sigma^n}' > 0$ for all $n$. Using $(iii)$ in Lemma 2.1 we know that, for some  $h \in \N$,  $(T_\sigma)^h$ is a compact operator. Then, by Gohberg's lemma, we have $$0 < d_{\sigma^h}' \leq d_{\sigma^h} \leq \norm{(T_\sigma)^h - K}_{\mathcal{L} (L^p (\T))},$$
for all compact operators $ K $. If we set $K = (T_\sigma)^h$, the above chain of inequalities contradicts our assumption that $d_{\sigma^h}' \neq 0$.

Now suppose $d_{\sigma^n}' = 0$ for some $n \in \N$. Then for $f \in C^2 (\T)$

\begin{align*}
(T_{\sigma^n} f) (x) &= \sum_{k \in \Z} {\sigma^n (x , k)} \widehat{f} (k) e^{ix \cdot k} \\
&= \sum_{k \in \Z} \Big( \sum_{m \in \Z} {\widehat{\sigma}^n}(m ,k) e^{i x \cdot m} \Big)  \widehat{f} (k) e^{ix \cdot k} \\
&=\sum_{m \in \Z}  (A_m  T_{\widehat{\sigma}_m^n} f) (x),
\end{align*}and $$\lim_{|k| \to \infty} |\widehat{\sigma}^n (m,k)| \leq \lim_{|k| \to \infty} \sup_{x \in \T} |\sigma^n (x , k)|= 0,$$
so, for each $m \in \Z$, $T_{\sigma^n_m}$ is an operator in the ideal $\mathfrak{R} (M_p) $ which is a subset of the closed ideal $\mathfrak{R} (L^p (\T))$ of inessential operators. As a consequence each operator $A_m  T_{\widehat{\sigma}_m^n}$ belongs to  $\mathfrak{R} (L^p (\T))$ and for all $N \in \N$ the operator $$\sum_{|m| \leq N} A_m  T_{\widehat{\sigma}_m^n},$$
belongs to $\mathfrak{R} (L^p (\T))$ as well. As already said, the series $$\sum_{m \in \Z}  A_m T_{\widehat{\sigma}_m^n},$$ 
converge in the operator norm topology. In summary $T_{\sigma^n}$ belongs to $\mathfrak{R} (L^p (\T))$ as the limit of a sequence of inessential operators
$$T_{\sigma^n} = \lim_{N \to \infty} \sum_{|m| \leq N} A_m T_{\widehat{\sigma}_m^n}.$$
Finally, let us see that $d_{\sigma^n}' = 0$ if and only if $d_\sigma'=0$. This is a simple consequence of toroidal composition formula. As we know, by toroidal composition formula,  taking the first two terms in the asymptotic expansion of $T_{\sigma^n} =T_\sigma  T_{\sigma^{n-1}}$ we have
$$\sigma^{n} (x , k) = (\sigma(x, k))(\sigma^{n-1} (x , k)) + r_{n}  (x, k),$$with $r_{n}  (x, k)  \in S^{-1}_{1,0} (\T \times \Z)$. Repeating this process we obtain

\begin{align*}
     \sigma^{n} (x , k) &= (\sigma(x, k ))^2 (\sigma^{n-2} (x , k)) + (\sigma (x , k))(r_{n-1} (x , k)) + r_{n} (x , k) \\ &= ... \\ &= (\sigma(x , k))^n +  \sum_{h=0 }^{n-1} (\sigma (x, k))^h (r_{n-h} (x, k)),
\end{align*}with each $r_j (x, k)  \in S^{-1}_{1,0} (\T \times \Z)$. From this we have
$$(\sigma(x , k))^n =  \sigma^{n} (x , k)  -  \sum_{h=0 }^{n-1} (\sigma (x, k))^h (r_{n-h} (x, k)),$$which implies $$|\sigma(x , k)|^n \leq   |\sigma^{n} (x , k)|  +  \sum_{h=0 }^{n-1} |\sigma (x, k)|^h |r_{n-h} (x, k)|,$$ and $$\sup_{x \in \T} |\sigma(x , k)|^n  \leq   \sup_{x \in \T} |\sigma^{n} (x , k)| +  \sum_{k=0 }^{n-1} C_{\sigma}^h \sup_{x \in \T} |r_{n-k} (x, k)|,$$

where $C_\sigma$ is a constant that bounds uniformly $|\sigma (x , k)|$ and whose existence is clear because $\sigma \in S^{0}_{1,0} (\T \times \Z)$. In conclusion $$\lim_{|k| \to \infty} \sup_{x \in \T} |\sigma(x , k)|^n  \leq   \lim_{|k| \to \infty} \sup_{x \in \T} |\sigma^{n} (x , k)| +  \sum_{h=0 }^{n-1} \lim_{|k| \to \infty} C_{\sigma}^h \sup_{x \in \T} |r_{n-h} (x, k)|.$$

This finish the proof.
\end{proof}

\begin{rem}[Algebraic considerations, part 2.]
\esp
\begin{enumerate}
\item[(i)] The collection of pseudo-differential operators
$$\Psi_p^m := \{T_\sigma : L^p (\T) \to L^p (\T) \esp : \esp \sigma \in S^m_{1,0} (\T \times \Z) \},$$
is a sub-algebra of $\mathcal{L}(L^p (\T))$, this as a consequence of the toroidal composition formula.
\item[(ii)] The component $\mathfrak{R} (L^p(\T))$ is the largest ideal in the algebra $\mathcal{L}(L^p (\T))$ consisting only of Riesz operators. As an immediate consequence $ \mathfrak{R}  \cap \Psi_p^m$ is the largest ideal of $\Psi_p^m$ consisting only of Riesz operators
\item[(iii)] The set $$ \text{  } \text{  } \text{  } \text{  } \esp \esp \text{  } \mathfrak{R} (\Psi_p^m) := \{T_\sigma : L^p(\T) \to L^p(\T) \esp : \esp \lim_{|k| \to \infty} \{ \sup_{x \in \T} |\sigma (x , k)| \} = 0 \},$$
is an ideal of $\Psi_p^m$ that, by the previous theorem , consist only of Riesz operators. Then $\mathfrak{R} (\Psi_p^m) \subseteq \mathfrak{R} \cap \Psi_p^m$.
\item[(iv).] Again, by the previous theorem, the symbol of any linear operator  $T_\sigma \in \mathfrak{R} \cap \Psi_p^m$ satisfy $$\lim_{|k| \to \infty} \{ \sup_{x \in \T} |\sigma (x , k)| \} = 0.$$

In conclusion $\mathfrak{R} (\Psi_p^m) = \mathfrak{R} \cap \Psi_p^m$.
\end{enumerate}

\end{rem}

We finish this section with a remark about compact operators on $L^p (\T)$. The statements of Proposition 2.2 and Theorem 2.4 could be more general. With sufficient conditions on the symbol for the boundedness of Fourier multipliers or pseudo-differential operators, let us call this conditions $(BF)$ and $(BP)$ respectively, and similar proofs as in Proposition 2.2 and Theorem 2.4, one can state: 

\esp

\begin{center}

\begin{minipage}{0.9\textwidth}

Let $T_\sigma : L^p (\T) \to L^p (\T)$ $1<p<\infty$ be a Fourier multiplier whose symbol $\sigma$ satisfy the condition $(BF)$. Then $T_\sigma$ is a Riesz operator if and only if $$\lim_{|k| \to \infty} |\sigma (k)| = 0.$$
Let $T_\sigma:L^p (\T) \to L^p (\T)$ be a pseudo-differential operator whose symbol $\sigma$ satisfy the condition $(BP)$. Then $T_\sigma$ is a Riesz operator in $L^p (\T)$ if and only if
$$d_\sigma' := \lim_{|k| \to \infty} \{ \sup_{x \in \T} |\sigma (x,k)| \} =0.
$$
\end{minipage}
\end{center}

Unfortunately conditions $(BF)$ and $(BP)$ are not easy to find, but we emphasise this more general form of Proposition 2.2 and Theorem 2.4 to give an example of a Riesz non-compact pseudo-differential operators, showing then that the problem of find necessary an sufficient conditions on the symbol for the compactness of the operator in $L^p (\T)$ is still open. For this, we will exhibit a bounded pseudo-differential operator whose symbol tends to zero uniformly, and thus is a Riesz operator, but not a compact operator on $L^p (\T)$.

As it is shown in \cite{ex}, there exist strictly singular operators on $L^p [0,1]$, and then on $L^p (\T)$, which are non-compact operators. Strictly singular operators are  bounded linear operators between normed spaces which are not bounded below on any infinite-dimensional subspace, and the class of strictly singular operators is contained in the class of Riesz operators on any Banach space.  For $L^p (\T)$, $1 <p<2$, one can construct an example of a strictly singular non-compact operator through the following process: consider a complemented subspace $F_p$ of $L^p (\T)$ isomorphic to $\ell^p(\N)$, generated by disjointly supported functions, and denote by $P_p$ a projection
from $L^p (\T)$ onto $F_p$. Take the inclusion $i_{ p,2}: \ell^p(\N) \to \ell^2 (\N)$, and define the operator $Q: \ell^2 (\N) \to L^p (\T)$ by $$Q[(t_n)](x) = \sum_{n \in \N } t_n r_n (x),$$ where $r_n$ are the Rademacher functions $r_n (x) = sign(\sin (2^{k-1} x))$. By Khintchine’s inequality, the operator $Q$ is an isomorphic embedding of $\ell^2(\N)$ into $L^p(\T)$ for every $1 < p < \infty$, and the operator $A _p : L^p (\T) \to L^p (\T)$ given by $$A = Q i_{p,2} P_{p},$$ is a non-compact strictly singular operator \cite{ex}. To construct our example let us define $$E_n := (\frac{2\pi}{2^{n+2}}, \frac{2 \pi}{2^{n+1}}), \esp \esp n \in \N,$$ and let $\chi_{n} (x)$ be the characteristic function on $E_n$. Take the closed subspace $F_p$ as $$F_p := Span \{2^{(n+2)/2} \chi_n \}_{n \in \N},$$ which, as it is discussed in \cite[Chapter 3, p.p. 126]{handbook1}, is isomorphic to $\ell^p (\N)$ and 1-complemented with the projection $P_p$ given by $$P_p [f](x) = \sum_{n \in \N} \Big( \int_{\T} 2^{(n+2)(p-1)/2}  \chi_n (x) \overline{f(x)} dx\Big) 2^{(n+2)/2} \chi_{n} (x).$$  Thus the linear operator $A = Q i_{p,2} P_{p}$ is a pseudo-differential operator with symbol 
\begin{align*}
\sigma_A (x,k) = e^{-i x \cdot k} A (e^{i x \cdot k}) &= e^{-i x \cdot k} \sum_{n \in \N} \Big( \int_{\T} 2^{(n+2)(p-1)/2}  \chi_n (x) e^{-i x \cdot k} dx\Big) r_n (x) \\ &= e^{-i x \cdot k} \sum_{n \in \N} \Big( \int_{2 \pi/2^{n+2}}^{2 \pi/2^{n+1}} 2^{(n+2)(p-1)/2}  e^{-i x \cdot k} dx\Big)  r_n (x) \\ &= \frac{e^{- i x \cdot k}}{ik} \sum_{n \in \N} 2^{(n+2)(p-1)/2} e^{-i k \frac{2 \pi}{2^{n+2}}} (e^{-i k \frac{2 \pi}{2^{n+2}}} - 1)r_n (x). 
\end{align*}

We note that each $k \in \Z$ fixed, and $n \in \N $ large enough, we get 
\begin{align*}
|e^{-ik \frac{2 \pi}{2^{n+2}}} - 1|^2 &= 2|\cos{\frac{2 \pi k}{2^{n+2}}} - 1| = 2|\cos{\frac{2 \pi k + 0}{2^{n+2}}} - \cos{0}|\\ &= 4 |\sin{\frac{2 \pi k - 0}{2^{n+3}}} \sin{\frac{2 \pi k + 0}{2^{n+3}}}| \\ &\leq \frac{ \pi |k| }{2^{n+2}},
\end{align*}
which proves that there exist a constant $C>0$ such that $$S_k:=\sum_{n \in \N}  2^{(n+2)(p-1)/2} |e^{ik \frac{2 \pi}{2^{n+2}}} - 1| \leq C |k|^{1/2},$$ and then 

$$\lim_{|k| \to \infty} \sup_{x \in \T} |\sigma_A (x , k)| \leq \lim_{|k| \to \infty} \frac{S_k}{|k|} \leq \lim_{|k| \to \infty} \frac{C}{|k|^{1/2}} = 0,$$ so $A$ is a non-compact Riesz operator.
\section{\textbf{Gershgorin Theory for Periodic Pseudo-differential Operators}}

In this section we will leave aside the spaces $L^p (\T)$, $p \neq 2$, and focus in the Hilbert space $L^2(\T)$. The main reason is that the Fourier transform is an isomorphism between $L^2(\T)$ and $L^2 (\Z)$ which allows to factorise operators in $L^2 (\T)$ through $L^2(\Z)$ a sequence space that, in some cases, it is easier to study. This factorisation allows one to obtain necessary and sufficient conditions for boundedness (Theorems 3.1 and 3.2) without assuming any regularity on the symbol, and sufficient conditions for invertibility, and spectrum localisation (Theorems 3.4 and 3.5). To start with , it is necessary to mention some details about the infinite matrices theory.

\subsection{Infinite Matrices}
\begin{defi}
{\normalfont An infinite matrix is a function $M: \Z \times \Z \to \C$ with matrix entries defined by $M_{jk} := M(j,k)$. If $M$ is an infinite matrix and $\varphi$ an infinite vector (or a function from $\Z$ to $\C$) then the product of the vector $\varphi$ an the matrix $M$ is defined as $$M\varphi (j) := \sum_{k \in \Z} M_{jk} \varphi (k).$$For infinite matrices $P$ and $Q$ their product is defined as the infinite matrix with entries $$ PQ_{jk} = \sum_{h \in \Z} P_{jh} Q_{hk},$$and as usual, the adjoint of the infinite matrix $M$ is the infinite matrix $M^*$ with entries $$M^*_{jk} := \overline{(M_{kj})}.$$
}
\end{defi}
It is easy to see that with the above definition for any pair of infinite vectors (functions $\varphi_1, \varphi_2 : \Z \to \C$) and complex numbers $\lambda_1 , \lambda_2$ one has $$M(\lambda_1 \varphi_1 + \lambda_2 \varphi_2)  = \lambda_1 M \varphi_1  + \lambda_2 M \varphi_2 ,$$So it is reasonable to think that an infinite matrix $ M $ can define a linear operator on some sequence space. However, not all infinite matrices define linear operators, and some conditions should be imposed on the matrix to be sufficiently well behaved to define a linear operator. In this case we are interested in linear operators on $L^2 (\Z)$. Fortunately infinite matrices that define operators in $ L^2 (\Z) $ have already been studied and many results have been obtained. We state the most relevant for our work below. They can be found in \cite{Crone1971}.

\begin{lema}[Crone]
Let $M$ be an infinite matrix with rows and columns in $L^2(\Z)$ and define $P_n(x) := \sum_{|k| \leq n} \langle x , e_k \rangle e_k $, where $e_k$ is the $k$-th coordinate vector of $L^2 (\Z)$. Then $M$ define a bounded operator in $L^2 (\Z)$ if and only if
$$\sup_{n \in \N} \norm {P_n  M^*  M  P_n}_{\mathcal{L}(L^2 (\Z))} < \infty
,$$when this happens $$\sup_{n \in \N} \norm {P_n  M^*  M  P_n}_{\mathcal{L}(L^2 (\Z))} = \norm{M}_{\mathcal{L}(L^2 (\Z))}^2 .$$
\end{lema}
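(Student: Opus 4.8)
The plan is to prove the two bounds
$\sup_{n} \norm{P_n M^* M P_n}_{\mathcal{L}(L^2(\Z))} \le \norm{M}_{\mathcal{L}(L^2(\Z))}^2$ and
$\norm{M}_{\mathcal{L}(L^2(\Z))}^2 \le \sup_{n} \norm{P_n M^* M P_n}_{\mathcal{L}(L^2(\Z))}$ separately, the first containing the assertion that boundedness of $M$ makes the supremum finite, the second containing the assertion that finiteness of the supremum forces $M$ to be a bounded operator.

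For the easy bound, assume $M \in \mathcal{L}(L^2(\Z))$. Each $P_n$ is the orthogonal projection onto $\mathrm{span}\{e_k : |k|\le n\}$, hence a self-adjoint contraction, so $P_n M^* M P_n$ is a positive self-adjoint operator and for every unit vector $y$ one has $\langle P_n M^* M P_n y, y\rangle = \norm{M P_n y}^2 \le \norm{M}^2$. Taking the supremum over $y$ gives $\norm{P_n M^* M P_n} \le \norm{M}^2$ for each $n$, and hence $\sup_n \norm{P_n M^* M P_n} \le \norm{M}^2 < \infty$.

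For the converse, set $C := \sup_n \norm{P_n M^* M P_n} < \infty$. First I would check that all the objects involved make sense: because the rows and columns of $M$ lie in $L^2(\Z)$, for every $x \in L^2(\Z)$ the vector $Mx$ given by $(Mx)(j) = \sum_k M_{jk} x(k) = \langle x, \ol{M(j,\cdot)}\rangle$ is a well-defined function on $\Z$; for finitely supported $x$ it is a finite linear combination of columns of $M$, hence lies in $L^2(\Z)$; and each $P_n M^*$ is a bounded operator, since $(P_n M^* w)(j) = \langle w, M(\cdot,j)\rangle$ for $|j| \le n$ and $P_n M^* w$ is supported in $\{|j| \le n\}$, so that indeed $P_n M^* M P_n \in \mathcal{L}(L^2(\Z))$. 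Now the key computation: if $x$ is supported in $\{|k| \le n\}$, so $x = P_n x$, a rearrangement of the finite sums defining it (legitimate because $x$ is finitely supported and $Mx \in L^2(\Z)$) gives $\norm{Mx}^2 = \langle M^* M x, x\rangle$, and therefore
$$\norm{Mx}^2 = \langle M^* M x, x\rangle = \langle P_n M^* M P_n x, x\rangle \le \norm{P_n M^* M P_n}\,\norm{x}^2 \le C\,\norm{x}^2,$$
so that $\norm{Mx} \le \sqrt{C}\,\norm{x}$ for every finitely supported $x$.

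Finally I would pass to the limit. For arbitrary $x \in L^2(\Z)$ put $x_n := P_n x$, so $x_n \to x$ in $L^2(\Z)$; then $\norm{M x_n} \le \sqrt{C}\,\norm{x}$ for all $n$, while $(M x_n)(j) = \langle x_n, \ol{M(j,\cdot)}\rangle \to \langle x, \ol{M(j,\cdot)}\rangle = (Mx)(j)$ for every $j$, since $\ol{M(j,\cdot)} \in L^2(\Z)$. A sequence that is bounded in $L^2(\Z)$ and converges coordinatewise has its limit in $L^2(\Z)$ with norm at most the $\liminf$ of the norms (Fatou), so $Mx \in L^2(\Z)$ with $\norm{Mx} \le \sqrt{C}\,\norm{x}$. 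Hence $M$ defines a bounded operator with $\norm{M}^2 \le C$, and combined with the easy bound this yields $\sup_n \norm{P_n M^* M P_n} = \norm{M}^2$. The only real care needed — and the place the hypothesis on rows and columns is genuinely used — is the bookkeeping that makes $Mx$, $M^*$ and $P_n M^* M P_n$ meaningful before they are manipulated, and, in the converse, not presupposing boundedness of $M$ but extracting it a posteriori from the uniform estimate on the dense subspace of finitely supported vectors via the coordinatewise-limit argument.
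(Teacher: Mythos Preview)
The paper does not give its own proof of this lemma: it is stated as a result of Crone and attributed to \cite{Crone1971}, so there is no in-paper argument to compare against. Your proof is correct and is essentially the natural approach one would expect: use that $P_n M^* M P_n$ is positive with $\langle P_n M^* M P_n y, y\rangle = \norm{M P_n y}^2$ to get the easy inequality, then for the converse exploit the identity $\norm{Mx}^2 = \langle P_n M^* M P_n x, x\rangle$ on finitely supported $x$ and pass to general $x$ by a Fatou/coordinatewise-limit argument. The care you take in checking that $M P_n$, $P_n M^*$, and hence $P_n M^* M P_n$, are genuinely bounded (using the row/column hypothesis) before invoking the supremum is exactly the point where a sloppy write-up could go wrong, and you handle it cleanly.
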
\begin{lema}[Crone]
Let $M$ be a infinite matrix. Then $M$ defines a bounded linear operator on $L^2(\Z)$ if and only if satisfy the following conditions 
\begin{enumerate}
\item[(i)] The rows of $M$ are functions in $L^2 (\Z)$.
\item[(ii)] $(M^*  M)^n$ is defined for all $n \in \N$.
\item[(iii)] $\sup_{n \in \N} \sup_{k \in \Z} 	|(M^*  M)^n_{kk}|^{1/n} < \infty$
\end{enumerate}when this happens we get
$$\sup_{n \in \N} \sup_{k \in \Z} 	|(M^*  M)^n_{kk}|^{1/n} =\norm{M}_{ \mathcal{L}(L^2(\Z))}^2.$$
\end{lema}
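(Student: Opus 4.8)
The plan is to deduce this criterion from the preceding Crone Lemma (the characterisation of boundedness via $\sup_n\norm{P_nM^*MP_n}$) together with the spectral theory of positive operators and one monotonicity fact about compressions. Write $A:=M^*M$. Hypothesis (i) gives the rows of $M$ in $L^2(\Z)$ and (ii) (already for $n=1$, since $(M^*M)_{kk}$ is the squared norm of the $k$-th column) gives the columns of $M$ in $L^2(\Z)$; hence the hypotheses of the preceding lemma are met, each $MP_n$ is finite rank (hence bounded), and each $B_n:=P_nM^*MP_n=(MP_n)^*(MP_n)$ is a genuine positive semidefinite $(2n+1)\times(2n+1)$ matrix whose operator norm on $L^2(\Z)$ equals its spectral radius.

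For the easy implication, if $M$ is bounded then $A$ is bounded, positive and self-adjoint, so $\norm{A^n}=\norm{A}^n=\norm{M}^{2n}$, whence $|(A^n)_{kk}|=|\langle A^ne_k,e_k\rangle|\le\norm{A^n}$ and all three conditions hold with $\sup_n\sup_k|(A^n)_{kk}|^{1/n}\le\norm{M}^2$. For the converse, assume (i), (ii), (iii) and set $\alpha:=\sup_n\sup_k|(A^n)_{kk}|^{1/n}<\infty$. By the preceding lemma it suffices to prove $\norm{B_N}\le\alpha$ for every $N$: then $M$ is bounded and $\norm{M}^2=\sup_N\norm{B_N}\le\alpha$, which together with the easy bound gives $\norm{M}^2=\alpha$. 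Fix $N$. As $B_N$ is a finite positive matrix, $\norm{B_N}=\rho(B_N)=\lim_{n\to\infty}(\operatorname{tr}B_N^n)^{1/n}$ and $\operatorname{tr}B_N^n=\sum_{|k|\le N}(B_N^n)_{kk}$, so everything reduces to the diagonal comparison
\[
(B_N^n)_{kk}\le(A^n)_{kk}\qquad(|k|\le N,\ n\in\N),
\]
which yields $\operatorname{tr}B_N^n\le(2N+1)\alpha^n$ and hence $\norm{B_N}\le(2N+1)^{1/n}\alpha\to\alpha$.

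I expect this diagonal comparison to be the main obstacle. For $n=1$ it is trivial and for $n=2$ it follows from the operator inequality $(PCP)^2\le PC^2P$ (for $C\ge0$ and $P$ an orthogonal projection, since $PC^2P-(PCP)^2=(QCP)^*(QCP)\ge0$ with $Q=I-P$); but for $n\ge3$ the analogous operator inequality fails because $t\mapsto t^n$ is not operator convex, so one must argue with the scalar form $\langle\,\cdot\,e_k,e_k\rangle$. Let $\mu$ and $\nu$ be the spectral measures at $e_k$ of $A$ and of the compression $P_NAP_N|_{\operatorname{ran}P_N}$; these are probability measures with the same barycentre, and the pinching $X\mapsto P_NXP_N+(I-P_N)X(I-P_N)$ --- a unital completely positive map fixing the vector state at $e_k$ --- contracts the spectral scale, so $\nu$ is dominated by $\mu$ in the convex order; since $t\mapsto t^n$ is convex on $[0,\infty)$, $(B_N^n)_{kk}=\int t^n\,d\nu\le\int t^n\,d\mu=(A^n)_{kk}$. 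To make this rigorous without circularly presupposing that $A$ is bounded, one first establishes the comparison with $A$ replaced by the finite truncation $P_RAP_R$ ($R\ge N$), where all objects are finite-dimensional, and then lets $R\to\infty$, all series converging by (i) and (ii). This is essentially the route taken in \cite{Crone1971}, to which we refer for the remaining details.
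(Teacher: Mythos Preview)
The paper does not prove this lemma; both Crone results are stated as citations from \cite{Crone1971}. Your attempt to deduce the second from the first founders exactly where you flagged the ``main obstacle'': the diagonal comparison $(B_N^{\,n})_{kk}\le (A^n)_{kk}$ is \emph{false} for $n\ge 3$, and with it the convex-order claim $\nu\prec_{\mathrm{cx}}\mu$ collapses. A concrete counterexample is
\[
A=\begin{pmatrix}1&1&1\\1&10&-10\\1&-10&15\end{pmatrix},\qquad P=\text{projection onto }\operatorname{span}\{e_1,e_2\},\qquad e_k=e_1.
\]
The leading principal minors $1,\,9,\,5$ show $A>0$; with $B=PAP\big|_{\operatorname{ran}P}=\bigl(\begin{smallmatrix}1&1\\1&10\end{smallmatrix}\bigr)$ one computes $(B^3)_{11}=13$ while $(A^3)_{11}=10$. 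Since $t\mapsto t^3$ is convex on $[0,\infty)$ this already refutes $\int t^3\,d\nu\le\int t^3\,d\mu$. The heuristic that pinching ``contracts the spectral scale'' refers to majorisation of eigenvalue \emph{lists}; it says nothing about the spectral measure at a fixed vector, which also depends on how $e_k$ sits relative to the eigenbasis. The Davis--Choi--Jensen inequality you are implicitly invoking requires operator convexity, and there is no general scalar substitute valid in an arbitrary vector state.

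Crone's actual route avoids this comparison entirely. With $A=M^*M$ and $\alpha:=\sup_{n,k}(A^n)_{kk}^{1/n}$, positivity of each $A^n$ on finitely supported vectors (a consequence of~(ii)) gives the Cauchy--Schwarz bound $|(A^n)_{jk}|^2\le (A^n)_{jj}(A^n)_{kk}\le\alpha^{2n}$, so that for a unit vector $x$ supported in $\{|k|\le N\}$ one has $\langle A^n x,x\rangle\le (2N+1)\,\alpha^{n}$. Combining this with the log-convexity $\langle Ax,x\rangle\le\langle A^n x,x\rangle^{1/n}$ and letting $n\to\infty$ yields $\norm{Mx}^2=\langle Ax,x\rangle\le\alpha$, which is precisely the criterion of the first Crone lemma with the correct constant. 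The point is to bound the full quadratic form $\langle A^n x,x\rangle$ via off-diagonal Cauchy--Schwarz rather than to compare diagonals of a compression.
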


Next we will adapt these theorems to periodic pseudo-differential operators. 
\subsection{$L^2$-Boundedness }

Let $\sigma : \T \times \Z \to \C $ be a measurable function such that $\sigma (\cdot , k) \in L^2 (\T)$ for each $k  \in \Z$ and $T_\sigma$ be its associated pseudo-differential operator. Then, for $f \in L^2 (\T)$, we can write
\begin{align*}
    (T_\sigma f) (x) &= \sum_{k \in \Z} \Big( \sum_{m \in \Z} \widehat{\sigma} (m , k) e^{i x \cdot m}\Big) \widehat{f} (k) e^{i x \cdot k} \\
    &= \sum_{j \in \Z} \Big( \sum_{k \in \Z} \widehat{\sigma} (j-k , k) \widehat{f} (k) \Big) e^{i x \cdot j}.
\end{align*}
From this, it is straightforward that the $j$-th Fourier coefficient of $T_\sigma f$ is
$$ \sum_{k \in \Z} \widehat{\sigma} (j-k , k) \widehat{f}(k),$$ which is similar to the $j$-th entry of the matrix-vector product 

$$
    \sum_{k \in \Z} M_{jk} \varphi (k).
$$

This observation is the key fact of this section and is the motivation for the following definition.

\begin{defi}[Associated matrix.]
{\normalfont Let $\sigma : \T \times \Z \to \C $ be a measurable function such that $\sigma (\cdot , k) \in L^2 (\T)$ for each $k  \in \Z$ and $T_\sigma$ be its associated pseudo-differential operator. Then its}  associated matrix {\normalfont $M_\sigma$ is defined as the infinite matrix with entries 

\begin{align*}
    (M_\sigma)_{jk} := \widehat{\sigma} (j-k , k) = \int_\T \sigma (x , k) e^{- i x \cdot (j-k)} dx .
\end{align*}}
\end{defi}

With this definition in mind, the operator $T_\sigma $ can be factored through $L^2 (\Z)$ as the following diagram shows

\begin{center}
\begin{tikzcd}
L^2 (\T) \arrow{r}{T_{\sigma}} \arrow[swap]{d}{\mathcal{F}_{\T}} & L^2 (\T)  \\%
L^2(\Z) \arrow{r}{M_\sigma}& L^2(\Z) \arrow{u}{\mathcal{F}^{-1}_{\T}}
\end{tikzcd}
\end{center}

where $\mathcal{F}_\T$ and $\mathcal{F}_\T^{-1}$ are the toroidal Fourier transform and toroidal inverse Fourier transform, defined by $$\mathcal{F}_\T (f) (k) := \int_\T f(x) e^{-ix \cdot k} dx, \esp \esp \text{ and } \esp \esp \mathcal{F}_\T^{-1} (\widehat{f}) (x) := \sum_{k \in \Z} \widehat{f} (k) e^{i x \cdot k}.$$

These linear operators extend to a unitary operator. For this reason the operator $T_\sigma$ is bounded in $L^2 (\T)$ if and only if the infinite matrix $M_\sigma$ defines a bounded operator in $L^2 (\Z)$ and $\norm{T_\sigma}_{\mathcal{L} (L^2(\T))} = \norm{M_\sigma}_{\mathcal{L} (L^2(\Z))}$ . This allow us to apply Lemmas 3.1 and 3.2 to give necessary and sufficient conditions for $L^2$-Boundedness of pseudo-differential operators. 

\begin{teo}
Let $\sigma : \T \times \Z \to \C $ be a measurable function such that $\sigma (\cdot , k) \in L^2 (\T)$ for each $k  \in \Z$ and $T_\sigma$ be its associated pseudo-differential operator. Then $T_\sigma$ defines a bounded linear operator on $L^2(\T)$ if and only if rows of the associated matrix $M_\sigma$ are in $L^2 (\Z)$, $(M_\sigma^* M_\sigma)^n$ is defined for all $n \in \N$, and 

$$
\sup_{n \in \N} \sup_{k \in \Z} \Big| \int_{\T} \tau^n (x , k) dx \Big|^{1/n}  < \infty
,$$

where $\tau$ is the symbol of $\esp (T_\sigma)^* T_\sigma = T_{\sigma^*}  T_\sigma$ and  $\tau^n$ is the symbol of $(T_\tau)^n$. When this happens then

$$
    \sup_{n \in \N} \sup_{k \in \Z} \Big| \int_{\T} \tau^n (x , k) dx \Big|^{1/n} = \norm{T_\sigma}_{\mathcal{L} (L^2 (\T))}^2.
$$
\end{teo}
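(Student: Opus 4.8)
The plan is to transport the problem from $L^2(\T)$ to $L^2(\Z)$ via the unitary toroidal Fourier transform and then apply Crone's second lemma (Lemma 3.2) verbatim to the associated matrix $M_\sigma$. The commutative diagram preceding the statement already establishes that $T_\sigma = \mathcal{F}_\T^{-1} M_\sigma \mathcal{F}_\T$, and since $\mathcal{F}_\T$ is unitary this gives $\norm{T_\sigma}_{\mathcal{L}(L^2(\T))} = \norm{M_\sigma}_{\mathcal{L}(L^2(\Z))}$ whenever either side is finite, and more importantly $T_\sigma$ is bounded on $L^2(\T)$ if and only if $M_\sigma$ is bounded on $L^2(\Z)$. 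So the three stated conditions need only be shown to be the translation, through the dictionary $(M_\sigma)_{jk} = \widehat{\sigma}(j-k,k)$, of conditions (i)--(iii) of Lemma 3.2.

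First I would check that ``rows of $M_\sigma$ are in $L^2(\Z)$'' is literally condition (i) — there is nothing to do here since Definition 3.3 already names the entries, and the hypothesis $\sigma(\cdot,k)\in L^2(\T)$ is exactly what makes each \emph{column} $k\mapsto \widehat{\sigma}(j-k,k)$ make sense entrywise (the row condition being the genuine extra assumption). Condition (ii), ``$(M_\sigma^* M_\sigma)^n$ is defined for all $n$,'' must be matched with ``$(M_\sigma^* M_\sigma)^n$ is defined.'' Here I would invoke the composition bookkeeping: when $T_\sigma$ is bounded, $(T_\sigma)^* T_\sigma = T_{\sigma^*} T_\sigma$ is a pseudo-differential operator with some symbol $\tau$, and under the correspondence of Definition 3.3 the matrix of $T_\tau$ is exactly $M_\sigma^* M_\sigma$ (matrix adjoint corresponds to operator adjoint, matrix product to operator composition, as spelled out in Definition 3.1). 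Iterating, the matrix of $(T_\tau)^n$ is $(M_\sigma^* M_\sigma)^n$, so its diagonal entries are the zeroth Fourier coefficients of its symbol $\tau^n$:
\begin{align*}
 (M_\sigma^* M_\sigma)^n_{kk} = \widehat{\tau^n}(0,k) = \int_\T \tau^n(x,k)\, dx.
\end{align*}
Plugging this identity into the supremum condition (iii) of Lemma 3.2, namely $\sup_{n}\sup_{k}|(M_\sigma^* M_\sigma)^n_{kk}|^{1/n}<\infty$ with equality to $\norm{M_\sigma}^2$, yields precisely the displayed condition and the displayed norm formula, after using $\norm{M_\sigma}_{\mathcal{L}(L^2(\Z))} = \norm{T_\sigma}_{\mathcal{L}(L^2(\T))}$.

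The main obstacle is making the symbolic-calculus step rigorous at the level of generality claimed: the toroidal composition formula quoted as Proposition 1.1 is stated for Hörmander-class symbols, whereas here $\sigma$ is merely measurable with $\sigma(\cdot,k)\in L^2(\T)$, so I cannot appeal to it directly. Instead I would argue purely matrix-theoretically — define $\tau$ as the symbol whose associated matrix \emph{is} $M_\sigma^* M_\sigma$ (equivalently, set $\tau(x,k)$ by inverse-Fourier-transforming the $k$-th column of $M_\sigma^* M_\sigma$, when that column lies in $L^2(\Z)$), note that this $\tau$ is the symbol of $T_{\sigma^*}T_\sigma$ because the correspondence $T_\bullet \leftrightarrow M_\bullet$ is a $*$-algebra isomorphism on its domain of definition, and similarly let $\tau^n$ be the symbol attached to $(M_\sigma^* M_\sigma)^n$. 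The only subtlety is that ``$(M_\sigma^* M_\sigma)^n$ is defined'' is a nontrivial hypothesis about convergence of the defining double series, but this is handled by Lemma 3.2 itself rather than by us. One should also remark that all of this is unconditional in the sense that the equivalence does not presuppose boundedness — if any of the three conditions fails, $M_\sigma$ (hence $T_\sigma$) is unbounded — which is exactly the content of Crone's lemma and requires no further comment.
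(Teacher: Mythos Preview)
Your proposal is correct and follows essentially the same route as the paper: transport the problem to $L^2(\Z)$ via the unitary Fourier transform, apply Crone's lemma (Lemma 3.2) to the associated matrix $M_\sigma$, and identify the diagonal entries $(M_\sigma^* M_\sigma)^n_{kk}$ with $\int_\T \tau^n(x,k)\,dx$. If anything, you are slightly more careful than the paper in flagging that the Hörmander-class composition formula does not apply here and that $\tau$, $\tau^n$ must be read off directly from the matrices $M_\sigma^* M_\sigma$ and $(M_\sigma^* M_\sigma)^n$ rather than from Proposition~1.1.
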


\begin{proof}
Let $M_\sigma$ be the associated matrix of $T_\sigma$. Then, since rows of $M_\sigma$ are $L^2(\Z)$, the adjoint matrix is well defined as a densely defined operator in $L^2(\Z)$, and is a pseudo-differential operator with symbol $$\sigma^* (x, j)= \sum_{k \in \Z} \widehat{\sigma} (j - k , k) e^{i x \cdot k}.
$$Now consider the operator $T_{\sigma^*}  T_\sigma$. Since $(M_\sigma^* M_\sigma)^n$ is defined for all $n$, it is a pseudo-differential operator with symbol $\tau(\cdot , k) \in L^2(\T)$ for each $k$ and, again by the existence of $(M_\sigma^* M_\sigma)^n$ for all $n$, $T_\tau^n$ is a pseudo-differential operator with symbol $\tau^n (\cdot , k) \in L^2(\T)$ for each $k$ and for all $n \in \N$. We have already cover satisfy hypothesis $(i)$ and $(ii)$ of Lemma 3.2 and consequently $M_\sigma$ defines a bounded linear operator in $L^2(\Z)$ if and only if aditionally we have $$\sup_{n \in \N} \sup_{k \in \Z} 	|(M_{\sigma^*}   M_\sigma )^n_{kk}|^{1/n} = \sup_{n \in \N} \sup_{k \in \Z} \Big| \int_{\T} \tau^n (x , k) dx \Big|^{1/n}  < \infty.$$
\end{proof}
\begin{coro}
Let $T_\sigma$ be a pseudo-differential operator with symbol $\sigma \in S^m_{\rho , \delta} (\T\times \Z)$. Then $T_\sigma$ defines a bounded operator on $L^2(\T)$ if and only if 
$$
\sup_{n \in \N} \sup_{k \in \Z} \Big| \int_{\T} \tau^n (x , k) dx \Big|^{1/n}  < \infty
,$$

where $\tau$ is the symbol of $\esp (T_\sigma)^* T_\sigma = T_{\sigma^*}  T_\sigma$ and  $\tau^n$ is the symbol of $(T_\tau)^n$. When this happens then

$$
    \sup_{n \in \N} \sup_{k \in \Z} \Big| \int_{\T} \tau^n (x , k) dx \Big|^{1/n} = \norm{T_\sigma}_{\mathcal{L} (L^2 (\T))}^2.
$$
  
\end{coro}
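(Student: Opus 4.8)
The plan is to read the statement as a specialisation of Theorem 3.2: for a symbol in a Hörmander class the two structural hypotheses appearing there --- that the rows of $M_\sigma$ lie in $L^2(\Z)$ and that $(M_\sigma^*M_\sigma)^n$ is defined for every $n\in\N$ --- hold automatically, so only the quantitative condition on $\sup_{n}\sup_{k}\big|\int_\T\tau^n(x,k)\,dx\big|^{1/n}$ survives, and the norm identity is then inherited verbatim.

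First I would note that $\sigma\in S^m_{\rho,\delta}(\T\times\Z)$ forces $\sigma(\cdot,k)\in C^\infty(\T)\subset L^2(\T)$ for every $k$, so $T_\sigma$ and its associated matrix $M_\sigma$ are well-defined and Theorem 3.2 is available once its structural hypotheses are verified. The columns of $M_\sigma$ are immediately in $L^2(\Z)$: the $k$-th column is $j\mapsto\widehat\sigma(j-k,k)$ and $\norm{\widehat\sigma(\cdot,k)}_{L^2(\Z)}=\norm{\sigma(\cdot,k)}_{L^2(\T)}<\infty$ by Parseval; this also guarantees that $M_\sigma^*$ corresponds to the operator adjoint. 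For the rows I would exploit the smoothness in $x$: integrating by parts $r$ times and using the symbol bound $|D_x^r\sigma(x,k)|\leq C_r\langle k\rangle^{m+\delta r}$ yields, for every $r\in\N$, an estimate of the form $|\widehat\sigma(m,k)|\leq C_r\langle m\rangle^{-r}\langle k\rangle^{m+\delta r}$; inserting this into the $j$-th row sum $\sum_{k\in\Z}|\widehat\sigma(j-k,k)|^2$ and splitting according to $|k|\leq 2|j|$ versus $|k|>2|j|$ --- so that $\langle j-k\rangle$ controls $\langle k\rangle$ up to a constant on the second piece --- the decay $\langle m\rangle^{-r}$ beats the growth $\langle k\rangle^{m+\delta r}$ once $r$ is taken large enough, which puts every row in $L^2(\Z)$.

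Next I would establish that $(M_\sigma^*M_\sigma)^n$ is defined for all $n$. Since the toroidal Fourier transform is unitary, $M_\sigma^*$ corresponds to $(T_\sigma)^*$, which by the adjoint formula (Proposition 1.2) equals $T_{\sigma^*}$ with $\sigma^*\in S^m_{\rho,\delta}(\T\times\Z)$; by the toroidal composition formula (Proposition 1.1), $T_{\sigma^*}T_\sigma=T_\tau$ with $\tau$ in a Hörmander class of order $2m$, and inductively $(T_\tau)^n=T_{\tau^n}$ with $\tau^n$ again a $C^\infty$ symbol. Hence $(M_\sigma^*M_\sigma)^n=M_{\tau^n}$ is a genuine matrix for every $n$, whose diagonal entries are $(M_{\tau^n})_{kk}=\widehat{\tau^n}(0,k)=\int_\T\tau^n(x,k)\,dx$. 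With both structural hypotheses in place, Theorem 3.2 --- equivalently Lemma 3.2 applied through the factorisation $\mathcal{F}_\T^{-1}M_\sigma\mathcal{F}_\T=T_\sigma$ --- says $T_\sigma$ is bounded on $L^2(\T)$ precisely when $\sup_{n}\sup_{k}\big|\int_\T\tau^n(x,k)\,dx\big|^{1/n}<\infty$, in which case this supremum equals $\norm{M_\sigma}_{\mathcal{L}(L^2(\Z))}^2=\norm{T_\sigma}_{\mathcal{L}(L^2(\T))}^2$, which is the assertion.

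The step I expect to require the most care is the row estimate: making $\sum_k|\widehat\sigma(j-k,k)|^2<\infty$ rigorous means balancing the gain from $x$-smoothness against the polynomial growth of the symbol estimates in the $k$-variable, and for $\delta=1$ the crude integration-by-parts bound is not quite enough, so either a finer Peetre-type inequality or an implicit restriction $\delta<1$ is needed there. By contrast, the reduction to Theorem 3.2 and the identification of $(M_\sigma^*M_\sigma)^n$ with $M_{\tau^n}$ are routine once the Ruzhansky--Turunen adjoint and composition calculus (Propositions 1.1 and 1.2) is invoked.
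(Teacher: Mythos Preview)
Your outline is correct and matches the paper's strategy: verify the two structural hypotheses of Theorem~3.1 (which you label Theorem~3.2) using the symbolic calculus, then invoke that theorem. The identification $(M_\sigma^*M_\sigma)^n=M_{\tau^n}$ via Propositions~1.1 and~1.2, with diagonal entries $\int_\T\tau^n(x,k)\,dx$, is exactly what the paper does.

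The one place where you diverge is the row-in-$L^2(\Z)$ estimate. Your direct integration-by-parts bound $|\widehat\sigma(m,k)|\leq C_r\langle m\rangle^{-r}\langle k\rangle^{m+\delta r}$ followed by a dyadic split does work, but as you yourself note it fails at $\delta=1$ because the gain $\langle j-k\rangle^{-r}$ cannot absorb the growth $\langle k\rangle^{\delta r}$. The paper sidesteps this entirely with a one-line observation: the $j$-th row of $M_\sigma$ is the complex conjugate of the $j$-th column of $M_{\sigma^*}$, so
\[
\sum_{k\in\Z}|\widehat\sigma(j-k,k)|^2=\sum_{k\in\Z}|\widehat{\sigma^*}(k-j,j)|^2=\norm{\sigma^*(\cdot,j)}_{L^2(\T)}^2<\infty,
\]
the last step being Parseval applied to $\sigma^*(\cdot,j)\in C^\infty(\T)$, which holds because Proposition~1.2 gives $\sigma^*\in S^m_{\rho,\delta}(\T\times\Z)$. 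This buys you the row condition for the full range of $(\rho,\delta)$ with no splitting and no balancing of exponents; your approach has the virtue of being self-contained (no adjoint formula needed for the row step), but at the cost of the restriction you identified.
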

\begin{proof}
The existence of $(M_\sigma^* M_\sigma)^n$ for all $n \in \N$
follows from toroidal composition formula, an the fact that $(M_\sigma^* M_\sigma)^n$ is the matrix associated to the symbol $\tau^n$ of $ ((T_\sigma)^* T_\sigma)^n = (T_{\sigma^*}  T_\sigma )^n$. Also, the $L^2(\Z)$-norm of the $j$-th row of $M_\sigma$ is

\begin{align*}
    \Big( \sum_{k \in \Z} |\widehat{\sigma} (j - k , k)|^2 \Big)^{1/2} &= \Big( \sum_{k \in \Z} |\overline{\widehat{\sigma}^* (k - j , j) }|^2 \Big)^{1/2} \\
    &= \Big( \sum_{k \in \Z} |\widehat{\sigma}^* (k - j , j)|^2 \Big)^{1/2} \\
    &= \norm{\sigma^* (\cdot , j)}_{L^2(\T)} < \infty.
\end{align*}
It follows from Theorem 3.1 that $T_\sigma$ is bounded if and only if 
$$
\sup_{n \in \N} \sup_{k \in \Z} \Big| \int_{\T} \tau^n (x , k) dx \Big|^{1/n}  < \infty
.$$

\end{proof}
\begin{teo}
Let $\sigma : \T \times \Z \to \C $ be a measurable function such that $\sigma (\cdot , k) \in L^2 (\T)$ for each $k  \in \Z$ and $T_\sigma$ be its associated pseudo-differential operator. Then $T_\sigma$ defines a bounded operator on $L^2 (\T)$ if and only if rows of associated matrix $M_\sigma$ are in $L^2 (\Z)$ and  

$$\sup_{n \in \N} \norm{M_{\sigma , n}}_{\mathcal{L}(\ell^2 (\C^{2n+1}))} < \infty,$$

where $M_{\sigma , n}$ is the $(2n+1)\times (2n+1)$ matrix with entries 

\begin{align*}
    (M_{\sigma , n})_{jk} := \overline{\langle \sigma (x,j)e^{i x \cdot j} | \sigma (x,k)e^{i x \cdot k} \rangle}_{L^2 (\T)} \esp , \esp |j|,|k| \leq 2n+1,
\end{align*}

when this happens

$$
\sup_{n \in \N} \norm{M_{\sigma , n}}_{\mathcal{L}(\ell^2 (\C^{2n+1}))} = \norm{T_\sigma}_{\mathcal{L}(L^2 (\T))}^2 .
$$
\end{teo}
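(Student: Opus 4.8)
The plan is to reduce the statement to Crone's first criterion (Lemma 3.1) via the factorisation diagram, and then to identify the finite truncations $P_n M_\sigma^* M_\sigma P_n$ appearing there with the explicit finite matrices $M_{\sigma,n}$ written in the statement. Since $\mathcal{F}_\T$ is unitary, $T_\sigma$ is bounded on $L^2(\T)$ if and only if $M_\sigma$ is bounded on $L^2(\Z)$, with equal norms; so it suffices to work entirely on the sequence side. The hypothesis that the rows of $M_\sigma$ lie in $L^2(\Z)$ is exactly condition (i) of Lemma 3.1, and I would first record that, because $(M_\sigma)_{jk}=\widehat{\sigma}(j-k,k)$, the $j$-th row has squared $\ell^2$-norm $\sum_k |\widehat{\sigma}(j-k,k)|^2$; a short computation as in the proof of Corollary 3.1 shows this equals $\|\sigma^*(\cdot,j)\|_{L^2(\T)}^2$, so the row condition is intrinsic and symmetric in the expected way. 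Likewise the columns of $M_\sigma$ are in $L^2(\Z)$ precisely because $\sigma(\cdot,k)\in L^2(\T)$ for each $k$, namely $\sum_j|\widehat\sigma(j-k,k)|^2=\|\sigma(\cdot,k)\|_{L^2(\T)}^2$.

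The heart of the argument is the identification of matrix entries. With $P_n$ the coordinate projection onto $\mathrm{span}\{e_k : |k|\le n\}$ in $L^2(\Z)$, the operator $P_n M_\sigma^* M_\sigma P_n$ is represented, on those $2n+1$ coordinates, by the matrix with $(j,k)$-entry $\sum_{h\in\Z}\overline{(M_\sigma)_{hj}}\,(M_\sigma)_{hk}=\sum_{h\in\Z}\overline{\widehat\sigma(h-j,j)}\,\widehat\sigma(h-k,k)$. I would then recognise this sum, by Parseval's identity on $\T$ applied to the functions $x\mapsto\sigma(x,j)e^{ix\cdot j}$ and $x\mapsto\sigma(x,k)e^{ix\cdot k}$ (whose $m$-th Fourier coefficients are $\widehat\sigma(m-j,j)$ and $\widehat\sigma(m-k,k)$ respectively), as the inner product $\langle \sigma(\cdot,k)e^{i(\cdot)k}\,,\,\sigma(\cdot,j)e^{i(\cdot)j}\rangle_{L^2(\T)}$, equivalently the conjugate $\overline{\langle \sigma(\cdot,j)e^{i(\cdot)j}\,|\,\sigma(\cdot,k)e^{i(\cdot)k}\rangle}_{L^2(\T)}$. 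This is exactly the definition of $(M_{\sigma,n})_{jk}$, so $P_n M_\sigma^* M_\sigma P_n$ and $M_{\sigma,n}$ are the same operator on a $(2n+1)$-dimensional space and hence have the same operator norm. (Here one must also note that, the rows of $M_\sigma$ being in $\ell^2$, each such sum converges absolutely, so $M_\sigma^* M_\sigma$ is well defined as a densely defined operator and the truncations make sense.)

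With this identification in place, Lemma 3.1 says that $M_\sigma$ defines a bounded operator on $L^2(\Z)$ if and only if $\sup_{n}\|P_n M_\sigma^* M_\sigma P_n\|_{\mathcal{L}(L^2(\Z))}<\infty$, and in that case the supremum equals $\|M_\sigma\|_{\mathcal{L}(L^2(\Z))}^2$. Substituting the identification and transporting back through the unitary $\mathcal F_\T$ gives $T_\sigma$ bounded on $L^2(\T)$ iff $\sup_n\|M_{\sigma,n}\|_{\mathcal{L}(\ell^2(\C^{2n+1}))}<\infty$, with the supremum equal to $\|T_\sigma\|_{\mathcal{L}(L^2(\T))}^2$, which is the assertion. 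I expect the main obstacle to be purely bookkeeping: matching conventions for the order of indices, the conjugation, and the direction of the inner product so that the finite matrices $M_{\sigma,n}$ literally coincide with $P_n M_\sigma^* M_\sigma P_n$ rather than with its transpose or conjugate (which would still have the same norm, but one wants the clean statement); and making sure the $L^2(\T)$-convergence and absolute summability needed to invoke Parseval are justified from the standing row hypothesis. No deep input beyond Crone's lemma and Parseval is needed.
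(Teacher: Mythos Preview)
Your proposal is correct and follows essentially the same route as the paper: compute the $(j,k)$-entry of $M_\sigma^* M_\sigma$ as $\sum_{h}\overline{\widehat{\sigma}(h-j,j)}\,\widehat{\sigma}(h-k,k)$, recognise it via Parseval as $\overline{\langle \sigma(\cdot,j)e^{i(\cdot)j}\,|\,\sigma(\cdot,k)e^{i(\cdot)k}\rangle}_{L^2(\T)}$, so that $\|P_n M_\sigma^* M_\sigma P_n\| = \|M_{\sigma,n}\|$, and then invoke Crone's Lemma 3.1 together with the unitarity of $\mathcal{F}_\T$. Your write-up is in fact more careful than the paper's about justifying the row/column $\ell^2$ conditions and the convergence needed for Parseval, but the structure and the key identification are identical.
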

\begin{proof}
We just have to see that

\begin{align*}
    (M_{\sigma^*} M_\sigma)_{jk} :&= \sum_{h \in \Z} \widehat{\sigma^*} (j-h,h) \cdot \widehat{\sigma} (h-k, k) \\
    &= \sum_{h \in \Z} \overline{\widehat{\sigma} (h-j,j) } \cdot \widehat{\sigma} (h-k, k) \\
    &= \overline{ \langle \sigma(x , j) e^{i x \cdot j} | \sigma(x,k) e^{i x \cdot k} \rangle}_{L^2(\T)} ,
\end{align*}

and $$\norm {P_n  M_{\sigma^*}  M_\sigma  P_n}_{\mathcal{L}(L^2 (\Z))} = \norm{M_{\sigma , n}}_{\mathcal{L} (\ell^2 (\C^{2n+1}))}.$$

with this the result follows from Lemma 3.1.
\end{proof}

\begin{rem}
Usually, sufficient conditions for boundedness of pseudo-differential operators are given appealing to the existence of a certain number of derivatives in the toroidal variable of the symbol \cite{ruzhansky1, JulioLpbounds, Duvan1}. However, Theorems 3.1 and 3.2 provide necessary and sufficient conditions for boundedness without any derivative. This allows one, for example, to recover the classical result about boundedness of a multiplication operators by a fixed function, which is the same as an operator with associated Toeplitz matrix, using Theorem 3.1.

Let us define $T_\phi f := \phi f$. Then $T_\phi$ is a pseudo-differential operator with symbol $\sigma(x , k)= \phi (x)$, and consequently $T_\phi^*$, $T_\phi^* T_\phi$ and $(T_\phi^* T_\phi)^n$ are pseudo-differential operators with symbols $\overline{\phi}$, $|\phi|^2$ and $|\phi|^{2n}$, respectively. This proves that rows of the associated matrix $M_\sigma$ are in $L^2 (\Z)$, and that $(M_\sigma^* M_\sigma)^n$ is defined for all $n \in \N$ so, $T_\phi$ is bounded if and only if $$
\sup_{n \in \N} \sup_{k \in \Z} \Big| \int_{\T} |\phi|^{2n} (x) dx \Big|^{1/n}  < \infty
,$$  but this equivalent to $$|\phi|^2 \in L^p(\T) \esp \esp \text{for all} \esp 1\leq p < \infty, \esp \esp \text{and} \esp \esp \sup_{p \in [1,\infty)} ||(|\phi|^2)||_{L^p(\T)} < \infty, $$ which is equivalent to $\phi \in L^\infty(\T)$. Another interesting example is given by the pseudo-differential operator associated to the symbol 
\[\sigma(x,k) = \begin{cases}
      0 & \text{if} \esp \esp \frac{1}{2^{|k|}} \leq x \leq 2\pi, \\ 1 & \text{if} \esp \esp 0 \leq x < \frac{1}{2^{|k|}}.
\end{cases}\]
For this operator one has $$\overline{\langle \sigma (x,j)e^{i x \cdot j} | \sigma (x,k)e^{i x \cdot k} \rangle}_{L^2 (\T)} = \int_{0}^{1/2^{\max(|j|, |k|)}}e^{-i x \cdot (j-k)}dx = \frac{-1}{i(j-k)} (e^{-i(j-k)/2^{\max(|j|,|k|)}} - 1),$$ and for $|j|,|k|$ large enough we have the inequality $$|\overline{\langle \sigma (x,j)e^{i x \cdot j} | \sigma (x,k)e^{i x \cdot k} \rangle}_{L^2 (\T)}| = \frac{1}{|j-k|} |e^{-i(j-k)/2^{\max(|j|,|k|)}} - 1| \leq \frac{1}{2^{\max(|j|,|k|)}},$$ proving that the entries of the matrix $M_\sigma^* M_\sigma$ decay rapidly, and thus Theorem 3.2 ensures that $T_\sigma$ defines a bounded operator, even when $\sigma (x,k)$ has no derivatives in the toroidal variable..
\end{rem}
 
\subsection{Gershgorin Theory.}
A beautiful and useful result about the spectrum of a matrix is the Gershgorin's circle theorem which we enunciate below.

\begin{teo}[Gershgorin]
Let $M$ be a $n \times n$ matrix with entries $a_{jk}$ and define $r_j :=  \sum_{ k \neq j} |a_{jk}|$. Then each eigenvalue $\lambda$ of $M$ lies in one of the disks $\overline{B_\C (a_{jj} ,  r_j)}$.
\end{teo}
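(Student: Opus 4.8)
The plan is to use the classical maximal-component argument. Let $\lambda$ be an eigenvalue of $M$ with a corresponding eigenvector $v = (v_1, \ldots, v_n) \neq 0$, so that $Mv = \lambda v$. First I would pick an index $i$ at which the vector attains its sup-norm, that is, choose $i$ with $|v_i| = \max_{1 \leq j \leq n} |v_j|$; since $v \neq 0$ this maximum is strictly positive, which is the one point that must be checked carefully but is immediate.

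Next I would write out the $i$-th coordinate of the equation $Mv = \lambda v$, namely $\sum_{k} a_{ik} v_k = \lambda v_i$, and isolate the diagonal term to get $(\lambda - a_{ii}) v_i = \sum_{k \neq i} a_{ik} v_k$. Taking absolute values and using the triangle inequality together with $|v_k| \leq |v_i|$ for all $k$ yields
$$|\lambda - a_{ii}|\, |v_i| \leq \sum_{k \neq i} |a_{ik}|\, |v_k| \leq \Big( \sum_{k \neq i} |a_{ik}| \Big) |v_i| = r_i |v_i|.$$
Dividing both sides by $|v_i| > 0$ gives $|\lambda - a_{ii}| \leq r_i$, i.e. $\lambda \in \overline{B_\C(a_{ii}, r_i)}$, which is the desired conclusion for this particular disk indexed by $i$.

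There is no real obstacle here; the only thing to emphasize is that the index $i$ depends on the chosen eigenvector, so different eigenvalues may land in different Gershgorin disks, which is exactly the statement "each eigenvalue lies in one of the disks." I would close by remarking that the same argument, applied to the transpose $M^{\mathsf{T}}$ (which has the same eigenvalues), gives the column version with $r_j$ replaced by the $j$-th column sum, though that refinement is not needed for what follows.
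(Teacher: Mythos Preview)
Your argument is correct and is exactly the classical maximal-component proof of Gershgorin's circle theorem. Note, however, that the paper does not actually give a proof of this statement: it is quoted as a well-known result and then used to motivate the infinite-dimensional Gershgorin-type lemmas that follow. So there is no ``paper's proof'' to compare against here; your write-up simply supplies the standard verification that the paper omits.
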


This theorem can be extended to operators that act on a infinite dimensional space, particularly to infinite matrices. There are many papers on the subject and indeed the Gershgorin theorem gives rise to an entire theory, called the Gershgorin theory. We enunciate below a theorem pertaining to this theory which is a subtle variation Theorem 2 in \cite{SHIVAKUMAR198735}. See also \cite{ALEKSIC2014541}.

\begin{lema}
Let $M$ be a infinite matrix. Define two new matrices $D$ and $F$ by $$D_{jk} := \delta_{jk} M_{jk} ,\esp \esp \text{and } \esp \esp F_{jk}:=(1-\delta_{jk}) M_{jk},$$
where $\delta_{jk}$ is the Kronecker delta. If the following conditions are met

\begin{enumerate}
    \item[(i).] $M_{kk} \neq 0 $ for all $k \in \Z$ and $\inf_{k \in \Z} |M_{kk}| > 0$.
    \item[(ii).] $I + FD^{-1}$ defines a bounded operator in $L^2 (\Z)$ with bounded inverse. 
\end{enumerate}

then $M$ is a invertible densely defined linear operator in $L^2 (\Z)$ with bounded inverse. If in addition

$$
    \lim_{|k| \to \infty} |M_{kk}| =  \infty,
$$

then the inverse of $M$ is a compact operator. 
\end{lema}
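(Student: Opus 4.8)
The plan is to factor $M$ through its diagonal part and reduce invertibility to the Neumann series for $I + FD^{-1}$, then handle the compactness of the inverse separately via an approximation-by-finite-rank argument. First I would write $M = (I + FD^{-1})D$, which is legitimate once condition (i) guarantees that $D$ is a well-defined, boundedly invertible diagonal operator on $L^2(\Z)$: indeed $D^{-1}$ is the diagonal matrix with entries $M_{kk}^{-1}$, and $\inf_k |M_{kk}| > 0$ gives $\|D^{-1}\|_{\mathcal{L}(L^2(\Z))} = 1/\inf_k |M_{kk}| < \infty$, while the rows of $M$ being in $L^2(\Z)$ together with the boundedness hypothesis on $I + FD^{-1}$ ensure $F$, and hence $M$, is at least densely defined. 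Under condition (ii), $I + FD^{-1}$ is boundedly invertible, so $M = (I+FD^{-1})D$ is a composition of two boundedly invertible operators (the second densely defined), hence $M^{-1} = D^{-1}(I+FD^{-1})^{-1}$ is bounded on $L^2(\Z)$. This establishes the first assertion.

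For the compactness claim, I would use the formula $M^{-1} = D^{-1}(I + FD^{-1})^{-1}$ and show that the extra hypothesis $\lim_{|k|\to\infty} |M_{kk}| = \infty$ forces $D^{-1}$ to be compact. Concretely, $D^{-1}$ is the diagonal operator with entries $M_{kk}^{-1} \to 0$ as $|k| \to \infty$; approximating it by the finite-rank diagonal truncations $P_n D^{-1} P_n$ (with $P_n$ the coordinate projections as in Lemma 3.1) gives $\|D^{-1} - P_n D^{-1} P_n\|_{\mathcal{L}(L^2(\Z))} = \sup_{|k|>n} |M_{kk}|^{-1} \to 0$, so $D^{-1}$ is a norm-limit of finite-rank operators, hence compact. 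Since the compact operators form a two-sided ideal in $\mathcal{L}(L^2(\Z))$, the product $M^{-1} = D^{-1}(I+FD^{-1})^{-1}$ is compact.

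The main obstacle I anticipate is a domain/closedness subtlety rather than anything deep: $M$ and $F$ are only a priori densely defined (their rows lie in $L^2(\Z)$ but they need not be bounded), so one must be careful that the algebraic identity $M = (I+FD^{-1})D$ genuinely holds as an identity of densely defined operators and that $(I+FD^{-1})^{-1}$ (which is bounded) composed with the bounded $D^{-1}$ really produces a bounded inverse of $M$ on all of $L^2(\Z)$ — i.e., that $M$ is closable with the claimed bounded inverse. This is exactly the content of the cited variation of \cite{SHIVAKUMAR198735}, Theorem 2, so I would invoke that for the bookkeeping; the only genuinely new observation needed here is the elementary fact that a diagonal operator with entries tending to $\infty$ has compact inverse, which the approximation argument above supplies.
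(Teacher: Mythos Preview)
Your proof is correct and follows essentially the same route as the paper: factor $M = (I + FD^{-1})D$, use condition (i) to get $D^{-1}$ bounded, condition (ii) to invert $I + FD^{-1}$, and then observe that $D^{-1}$ is compact when $|M_{kk}|\to\infty$ so that $M^{-1} = D^{-1}(I+FD^{-1})^{-1}$ inherits compactness via the ideal property. The paper's argument is slightly terser (it simply asserts $D^{-1}$ is compact from $|M_{kk}|^{-1}\to 0$ without spelling out the finite-rank approximation), and your mention of a ``Neumann series'' is superfluous since (ii) hands you the inverse directly, but substantively the two proofs are the same.
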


\begin{proof}
We just have to write $M = D + F$. If $M$ satisfy condition $(i)$ then $D^{-1}$ exist and defines a bounded operator in $L^2 (\Z)$. Clearly $$M = D + F = (I + FD^{-1}) D, $$ so the inverse of $ M $, if it exists, is $$ M^{-1} = D^{-1} (I + FD^{-1})^{-1}.$$ Applying the hypothesis $(ii)$ we conclude that $ M $ is invertible with continuous inverse. In addition if $$\lim_{|k| \to \infty} |M_{kk}| =  \infty,$$
then $$\lim_{|k| \to \infty} |M_{kk}|^{-1} = 0,
$$ so $D^{-1}$ is a compact operator, and in consequence $M^{-1}$ is compact as well.
\end{proof}

We will now apply this theorem to pseudo-differential operators to provide a way to locate the espectrum. For this the following proposition is needed.

\begin{pro}
Let $M$ be a infinite matrix such that $M$ and $M^*$ both define a bounded operator on $L^\infty (\Z)$. Then $M$ defines a bounded operator in $L^2 (\Z)$ and

$$ \norm{M}_{\mathcal{L} (L^2 (\Z))} \leq \sqrt{\norm{M}_{\mathcal{L} (L^\infty (\Z))} \norm{M^*}_{\mathcal{L} (L^\infty (\Z))}}.$$
\end{pro}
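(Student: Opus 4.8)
The statement is the classical Schur test for $L^2$-boundedness of an infinite matrix, and the plan is to derive it directly by a Cauchy--Schwarz estimate on the associated bilinear form, after first unwinding the two $L^\infty$-hypotheses into size conditions on the entries. Since $M$ acts on $L^\infty(\Z)$ through the matrix product, every row $M_{j\cdot}$ must lie in $L^1(\Z)$ (otherwise $M$ applied to a suitable bounded unimodular vector would fail to be well defined), and the standard identification of the operator norm on $L^\infty(\Z)$ gives
$$\norm{M}_{\mathcal{L} (L^\infty (\Z))} = \sup_{j \in \Z} \sum_{k \in \Z} |M_{jk}| =: A.$$
Applying the same fact to $M^*$, whose entries are $M^*_{jk} = \overline{M_{kj}}$, yields
$$\norm{M^*}_{\mathcal{L} (L^\infty (\Z))} = \sup_{j \in \Z} \sum_{k \in \Z} |M_{kj}| = \sup_{k \in \Z} \sum_{j \in \Z} |M_{jk}| =: B.$$
Thus the rows of $M$ are uniformly in $L^1(\Z)$ with bound $A$, and the columns of $M$ are uniformly in $L^1(\Z)$ with bound $B$.

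Next I would run the Schur test. For finitely supported $x,y \in L^2(\Z)$, split $|M_{jk}| = |M_{jk}|^{1/2}|M_{jk}|^{1/2}$ and estimate
$$\Big| \sum_{j,k} M_{jk}\, x_k\, \overline{y_j} \Big| \le \sum_{j,k} \big(|M_{jk}|^{1/2}|x_k|\big)\big(|M_{jk}|^{1/2}|y_j|\big).$$
By Cauchy--Schwarz in the pair of indices $(j,k)$ this is at most
$$\Big( \sum_{j,k} |M_{jk}|\,|x_k|^2 \Big)^{1/2} \Big( \sum_{j,k} |M_{jk}|\,|y_j|^2 \Big)^{1/2} = \Big( \sum_k |x_k|^2 \sum_j |M_{jk}| \Big)^{1/2} \Big( \sum_j |y_j|^2 \sum_k |M_{jk}| \Big)^{1/2},$$
and by the column bound and row bound from the previous paragraph this is $\le \sqrt{B}\,\norm{x}_{L^2(\Z)} \cdot \sqrt{A}\,\norm{y}_{L^2(\Z)} = \sqrt{AB}\,\norm{x}_{L^2(\Z)}\norm{y}_{L^2(\Z)}$.

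Finally I would promote this bilinear estimate to the operator bound. Because the rows of $M$ lie in $L^1(\Z) \subseteq L^2(\Z)$, the series $(M\varphi)_j = \sum_k M_{jk}\varphi_k$ converges absolutely for every $\varphi \in L^2(\Z)$; taking the supremum of the displayed estimate over finitely supported $y$ with $\norm{y}_{L^2(\Z)} \le 1$ shows $M\varphi \in L^2(\Z)$ with $\norm{M\varphi}_{L^2(\Z)} \le \sqrt{AB}\,\norm{\varphi}_{L^2(\Z)}$ for finitely supported $\varphi$, and then, by density of the finitely supported vectors and the uniform bound, for all $\varphi \in L^2(\Z)$. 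This is precisely the asserted inequality. (Equivalently one could invoke Riesz--Thorin interpolation: the column bound says $M$ is bounded on $L^1(\Z)$ with norm $B$, the row bound says it is bounded on $L^\infty(\Z)$ with norm $A$, and interpolating at $\theta = 1/2$ gives boundedness on $L^2(\Z)$ with norm $\le A^{1/2}B^{1/2}$.) I do not expect any genuine obstacle here; the only point requiring a little care is checking that the matrix acts on $L^2(\Z)$ in the first place and that the density argument is legitimate, both of which are immediate once the rows are known to be in $L^1(\Z)$.
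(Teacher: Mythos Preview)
Your proof is correct: this is the classical Schur test, and your execution---identifying the $L^\infty$-operator norms with the uniform row and column $\ell^1$-bounds, splitting $|M_{jk}|=|M_{jk}|^{1/2}|M_{jk}|^{1/2}$, and applying Cauchy--Schwarz on the bilinear form---is the standard argument, carried out cleanly. The alternative via Riesz--Thorin that you mention is equally valid.

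As for comparison with the paper: the paper states this proposition without proof, treating it as a known result and invoking it directly in the proof of Theorem~3.4. So there is no ``paper's own proof'' to compare against; your argument supplies exactly the justification the paper omits.
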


\begin{teo}
Let $\sigma : \T \times \Z \to \C $ be a measurable function such that $\sigma (\cdot , k) \in L^2 (\T)$ for each $k  \in \Z$ and $T_\sigma$ be its associated pseudo-differential operator. If $\sigma$ satisfy the following three properties 

\begin{enumerate}
    \item[(i).]$$\int_\T \sigma (x, k) dx \neq 0\esp \esp \text{for all }\esp k \in \Z \esp, \esp \inf_{k \in \Z }\Big| \int_\T \sigma(x,j) dx \Big| > 0,$$
    \item[(ii).] 
$$\sup_{k \in \Z} \Big| \int_\T \sigma(x,k) dx \Big|^{-1} \cdot \sum_{ j \neq k} |\widehat{\sigma}(j-k,k)| < 1,$$
    \item[(iii).]$$\sup_{j \in \Z} \Big| \int_\T \sigma(x,j) dx \Big|^{-1} \cdot \sum_{k \neq j} |\widehat{\sigma}(j-k,k)| < 1,$$
\end{enumerate}

or equivalently,

\begin{enumerate}
    \item[(i').] $$\int_\T \sigma (x, k) dx \neq 0  \esp \esp \text{for all }\esp k \in \Z  \esp \esp , \esp \inf_{k \in \Z} \Big| \int_\T \sigma(x,j) dx \Big| > 0,$$
    
    \item [(ii').]$$\sup_{k \in \Z} \Big| \int_\T \sigma(x,k) dx \Big|^{-1} \cdot \norm{\widehat{\sigma}(\cdot , k)}_{L^1(\Z)} < 2,$$
    \item[(iii').]$$\sup_{j \in \Z} \Big| \int_\T \sigma(x,j) dx \Big|^{-1} \cdot \norm{\widehat{\sigma^*}(\cdot , j)}_{L^1(\Z)} < 2,$$
\end{enumerate}

where $\sigma^* (x,k)$ is the symbol of adjoint operator, then $T_\sigma$ is an invertible linear operator with bounded inverse. In particular if $$\lim_{|k| \to \infty} \Big| \int_\T \sigma (x,k) dx \Big| = \infty$$ the inverse is a compact operator.
\end{teo}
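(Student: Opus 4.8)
The plan is to transport the problem to the associated matrix $M_\sigma$ and apply the Gershgorin-type Lemma 3.3. Since the toroidal Fourier transform $\mathcal{F}_\T$ is unitary and conjugates $T_\sigma$ to $M_\sigma$ (the commuting square preceding Theorem 3.1), $T_\sigma$ is invertible with bounded inverse on $L^2(\T)$ exactly when $M_\sigma$ is invertible with bounded inverse on $L^2(\Z)$, and $T_\sigma^{-1}$ is compact exactly when $M_\sigma^{-1}$ is. So I would set $M := M_\sigma$, write $M = D + F$ with $D$ its diagonal part, and note that $D_{kk} = (M_\sigma)_{kk} = \widehat\sigma(0,k) = \int_\T\sigma(x,k)\,dx$; hence hypothesis (i) of the theorem is literally hypothesis (i) of Lemma 3.3, and $D$ is a boundedly invertible diagonal operator on $L^2(\Z)$.

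The substantive step is verifying hypothesis (ii) of Lemma 3.3: that $N := FD^{-1}$, whose entries are $(1-\delta_{jk})\,\widehat\sigma(j-k,k)\big/\!\int_\T\sigma(x,k)\,dx$, is bounded on $L^2(\Z)$ and that $I+N$ has a bounded inverse. Here I would bound $\norm{N}_{\mathcal{L}(L^2(\Z))}$ via Proposition 3.1, i.e.\ by the geometric mean of $\norm{N}_{\mathcal{L}(L^\infty(\Z))}$ and $\norm{N^*}_{\mathcal{L}(L^\infty(\Z))}$, which are the suprema of the row- and column-$\ell^1$-sums of $|N|$. The $k$-th column-$\ell^1$-sum of $|N|$ is exactly $\big|\int_\T\sigma(x,k)\,dx\big|^{-1}\sum_{j\neq k}|\widehat\sigma(j-k,k)|$, the quantity in (ii); the row-$\ell^1$-sums I would control using $\inf_k|D_{kk}|>0$ from (i) together with (iii) (passing, if convenient, to the conjugate operator $D^{-1}F$, which has the same invertibility as $FD^{-1}$ and whose row-$\ell^1$-sums are literally the left side of (iii)). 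Since (ii) and (iii) are strict inequalities with supremum $<1$, this gives $\norm{N}_{\mathcal{L}(L^2(\Z))}<1$, so $I+N=I+FD^{-1}$ is invertible with bounded inverse by the Neumann series, and Lemma 3.3 then yields that $M_\sigma$, hence $T_\sigma$, is invertible with bounded inverse. Along the way I would record the identities $\norm{\widehat\sigma(\cdot,k)}_{L^1(\Z)} = |D_{kk}| + \sum_{j\neq k}|\widehat\sigma(j-k,k)|$ and $\norm{\widehat{\sigma^*}(\cdot,j)}_{L^1(\Z)} = |D_{jj}| + \sum_{k\neq j}|\widehat\sigma(j-k,k)|$, which immediately give the equivalences (ii)$\Leftrightarrow$(ii') and (iii)$\Leftrightarrow$(iii').

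For the final assertion, suppose in addition $\big|\int_\T\sigma(x,k)\,dx\big|\to\infty$ as $|k|\to\infty$. Then $(D^{-1})_{kk}\to 0$, so $D^{-1}$ is a diagonal operator with entries tending to zero, hence compact on $L^2(\Z)$; from $M_\sigma^{-1} = D^{-1}(I+FD^{-1})^{-1}$ (as in the proof of Lemma 3.3), $M_\sigma^{-1}$ is a compact operator composed with a bounded one and is therefore compact, and so is $T_\sigma^{-1}$ by the unitary conjugation. This is precisely the last clause of Lemma 3.3, so one may simply invoke it.

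I expect the main obstacle to be the norm estimate in the second paragraph, and in particular showing that $I+FD^{-1}$ has a bounded inverse. Of the two ingredients needed for Proposition 3.1 one is immediate — the column-$\ell^1$-sums of $FD^{-1}$ from (ii), or equivalently the row-$\ell^1$-sums of $D^{-1}F$ from (iii) — but the remaining ingredient carries the normalisation $|D_{kk}|^{-1}$ in a different position than in the matching hypothesis, so reconciling the two forces one to use all three of (i), (ii), (iii) together. Making this estimate rigorous, and checking that the relevant operator norms are even finite so that Proposition 3.1 applies, is the delicate point.
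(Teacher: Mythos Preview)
Your proposal follows essentially the same route as the paper: transport to the associated matrix $M_\sigma$, verify the hypotheses of Lemma~3.3 by using Proposition~3.1 to bound $\norm{F_\sigma D_\sigma^{-1}}_{\mathcal{L}(L^2(\Z))}$ via the row- and column-$\ell^1$ sums coming from (ii) and (iii), invoke the Neumann series, and read off compactness of the inverse from the last clause of Lemma~3.3. You are right to flag the mismatch in where the normalisation $|D_{kk}|^{-1}$ sits; the paper glosses over this and simply identifies $\norm{F_\sigma D_\sigma^{-1}}_{\mathcal{L}(L^\infty(\Z))}$ and $\norm{(F_\sigma D_\sigma^{-1})^*}_{\mathcal{L}(L^\infty(\Z))}$ with the suprema in (iii) and (ii) respectively, so your caution there is, if anything, more careful than the original.
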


\begin{proof}
Let $M_\sigma$ be the associated matrix of $T_\sigma$. We will show that this infinite matrix satisfy the hypothesis of Lemma 3.3. First its easy to see that $(i)$ in previous statement is equivalent to $i$ in Lemma 3.3. For the remaining hypothesis define $D_\sigma$ and $F_\sigma$ as
$$ (D_\sigma)_{jk} := \delta_{jk} (M_\sigma)_{jk}, \esp \esp \text{and } \esp \esp (F_\sigma)_{jk}:=(1-\delta_{jk}) (M_\sigma)_{jk},$$then by Proposition 3.1 
\begin{align*}
    \norm{F_\sigma D_\sigma^{-1}}_{\mathcal{L}(L^2(\Z))}^2 &= \norm{I - (I + F_\sigma D_\sigma^{-1} )}_{\mathcal{L}(L^2(\Z))}^2 \\
    &\leq   \norm{F_\sigma D_\sigma^{-1}}_{\mathcal{L} (L^\infty (\T))} \cdot  \norm{(F_\sigma D_\sigma^{-1})^*}_{\mathcal{L} (L^\infty (\T))},
\end{align*}
but clearly 
$$\norm{F_\sigma D_\sigma^{-1}}_{\mathcal{L} (L^\infty (\T))} = \sup_{j \in \Z} \Big| \int_\T \sigma(x,j) dx \Big|^{-1} \cdot \sum_{k \neq j} |\widehat{\sigma}(j-k,k)| < 1,$$
and
$$\norm{(F_\sigma D_\sigma^{-1})^*}_{\mathcal{L} (L^\infty (\T))} = \sup_{k \in \Z} \Big| \int_\T \sigma(x,k) dx \Big|^{-1} \cdot \sum_{ j \neq k} |\widehat{\sigma}(j-k,k)| < 1,$$

so $\norm{F_\sigma D_\sigma^{-1}}_{\mathcal{L}(L^2(\Z))} < 1$ and by Lemma 2.1 in \cite{conwayfa} the operator defined by $I + {F_\sigma}{D_\sigma}^{-1}$ is invertible with bounded inverse in $L^2 (\Z)$.
\end{proof}

\begin{exa}
Let $\alpha : \Z \to \C$ a measurable function that satisfy $\alpha(k) \neq 0$ for all $k$ and $V \in \mathcal{F}_\T^{-1} (L^1 (\Z))$ such that $\int_\T V(x) dx \neq 0$. Then for the symbol $\sigma (x,k) : = \alpha(k) V(x)$ one has

$$\widehat{\sigma} (j - k ,k) = \alpha(k) \widehat{V} (j-k)$$

and from this

$$
\sup_{k \in \Z} \Big| \int_\T \sigma(x,k) dx \Big|^{-1} \cdot \norm{\widehat{\sigma}(\cdot , k)}_{L^1(\Z)} =  \Big| \int_\T V(x) dx \Big|^{-1} \cdot \norm{\widehat{V}}_{L^1(\Z)},
$$
$$\sup_{j \in \Z} \Big| \int_\T \sigma(x,j) dx \Big|^{-1} \cdot \norm{\widehat{\sigma^*}(\cdot , j)}_{L^1(\Z)}  = \sup_{j \in \Z} \Big| \alpha (j) \int_\T V(x) dx \Big|^{-1} \cdot \sum_{k \in \Z} \alpha(k) \widehat{V}(j-k). 
$$
so if $V$ satisfy the conditions $$ \norm{\widehat{V}}_{L^1 (\Z)} < 2 \Big| \int_\T V(x) dx  \Big|,$$ and $$\Big| \alpha (j) \int_\T V(x) dx \Big|^{-1} \cdot \sum_{k \in \Z} \alpha(k) \widehat{V}(j-k) <1 \esp \esp \text{for all} \esp \esp j \in \Z,$$  then $T_\sigma$ is a invertible linear operator with bounded inverse. If a additionally $$\lim_{|k| \to \infty} |\alpha(k) | = \infty,$$ then the inverse is a compact operator.
\end{exa}

\begin{coro}
Let $\sigma : \T \times \Z \to \C $ be a measurable function such that $\sigma (\cdot , k) \in L^2 (\T)$ for each $k  \in \Z$ and $T_\sigma$ be its associated pseudo-differential operator. If the following conditions hold
\begin{enumerate}
    \item[(i).] 
    $$\int_\T \sigma (x, k) dx - \lambda  \neq 0 , \esp \esp \forall k \in \Z \esp \esp , \inf_{k \in \Z} \Big| \int_\T \sigma(x,j) dx - \lambda  \Big| > 0,$$
    \item[(ii).] $$\sup_{k \in \Z} \Big| \int_\T \sigma(x,k) dx - \lambda \Big|^{-1} \cdot \sum_{ j \neq k} |\widehat{\sigma}(j-k,k)| < 1,$$
    \item[(iii).]$$
    \sup_{j \in \Z} \Big| \int_\T \sigma(x,j) dx - \lambda \Big|^{-1} \cdot \sum_{k \neq j} |\widehat{\sigma}(j-k,k)| < 1,$$
\end{enumerate}
then $\lambda \in Res(T_\sigma)$.
\end{coro}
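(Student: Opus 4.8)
The plan is to reduce the corollary to Theorem 3.5 by a routine translation argument. Observe that for a fixed $\lambda \in \C$, the operator $T_\sigma - \lambda I$ is again a periodic pseudo-differential operator, namely $T_{\sigma_\lambda}$ with symbol $\sigma_\lambda(x,k) := \sigma(x,k) - \lambda$. Indeed, $-\lambda I$ is the Fourier multiplier with constant symbol $-\lambda$, so $T_\sigma - \lambda I = T_{\sigma - \lambda}$ directly from the defining formula for $T_\sigma$. Moreover $\sigma_\lambda(\cdot,k) = \sigma(\cdot,k) - \lambda \in L^2(\T)$ for each $k$ since constants are in $L^2(\T)$, so $T_{\sigma_\lambda}$ has an associated matrix and Theorem 3.5 applies to it.

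Next I would compute the quantities appearing in Theorem 3.5 for the symbol $\sigma_\lambda$. The zeroth Fourier coefficient picks up the shift: $\int_\T \sigma_\lambda(x,k)\, dx = \int_\T \sigma(x,k)\, dx - \lambda$, which is exactly the quantity $M_{kk} - \lambda$ shifted. For the off-diagonal Fourier coefficients, since $\lambda$ is constant in $x$, we have $\widehat{\sigma_\lambda}(j-k,k) = \widehat{\sigma}(j-k,k)$ whenever $j \neq k$ (the constant only contributes to the $j = k$ Fourier mode). Therefore conditions (i), (ii), (iii) of the present corollary are precisely conditions (i), (ii), (iii) of Theorem 3.5 written for the symbol $\sigma_\lambda$. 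Applying Theorem 3.5 to $\sigma_\lambda$ yields that $T_{\sigma_\lambda} = T_\sigma - \lambda I$ is an invertible linear operator with bounded inverse, i.e. $(T_\sigma - \lambda I)^{-1} \in \mathcal{L}(L^2(\T))$, which by definition means $\lambda \in Res(T_\sigma)$.

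The only genuine subtlety — and the step most deserving of care — is the verification that $\sigma_\lambda$ still satisfies the standing hypothesis that $\sigma_\lambda(\cdot,k) \in L^2(\T)$ for each $k$ (immediate, as noted) and, more importantly, that the translation does not disturb the off-diagonal structure. One must be slightly careful that $\widehat{\sigma_\lambda}(m,k) = \widehat{\sigma}(m,k)$ for all $m \neq 0$ and $\widehat{\sigma_\lambda}(0,k) = \widehat{\sigma}(0,k) - \lambda$; since in conditions (ii) and (iii) the sums run over $j \neq k$, equivalently over shift $m = j - k \neq 0$, the replacement is harmless. I would also remark that this corollary immediately gives a Gershgorin-type localisation of the spectrum: $Spec(T_\sigma)$ is contained in the complement of the set of $\lambda$ for which (i)–(iii) hold, so in particular, when the diagonal entries are well-separated and the row/column sums of off-diagonal Fourier coefficients are small, the spectrum clusters near the set $\{\int_\T \sigma(x,k)\,dx : k \in \Z\}$, in analogy with the classical Gershgorin disks of Theorem 3.3.
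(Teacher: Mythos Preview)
Your argument is correct and matches the paper's proof exactly: observe that $T_\sigma - \lambda I$ is the pseudo-differential operator with symbol $\sigma(x,k)-\lambda$, so the hypotheses (i)--(iii) here are just the invertibility hypotheses applied to $\sigma_\lambda$. One minor slip: the result you are invoking is Theorem~3.4 (the invertibility theorem immediately preceding this corollary), not Theorem~3.5, which has different hypotheses (it additionally requires $|\int_\T \sigma(x,k)\,dx|\to\infty$ and $L^1$ rows/columns).
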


\begin{proof}
We just have to see that for all $\lambda\in \C$ $\esp T_\sigma - \lambda I$ is a pseudo-differential operator with symbol $\sigma(x,k) - \lambda$.
\end{proof}

\begin{exa}
Let $\alpha : \Z \to \C$ a measurable function and $V \in \mathcal{F}_\T^{-1} (L^1 (\Z))$. Then for the symbol $\sigma (x,k) := \alpha(k) + V(x)$ one has 

\[ \widehat{\sigma} (j-k,k) = 
\begin{cases} 
      \widehat{V}(j-k) & j \neq k \\
      \alpha(k) + \int_\T V(x) dx & j=k 
   \end{cases}
\]and then if 
$$\sum_{k \neq j} |\widehat{V}(j-k)| < \inf_{k \in \Z} \Big| \alpha(k) + \int_\T V(x) dx - \lambda \Big|$$ $\lambda \in Res(T_\sigma)$ which proves that $$Spec(T_\sigma) \subseteq \bigcup_{k \in \Z} \overline{B_\C (c_k , r)}$$
where $$c_k = \alpha(k) + \int_\T V(x) dx \esp \esp \text{ and } \esp \esp r =\sum_{ j \neq k} |\widehat{V}(j-k)|$$
\end{exa}
To finish this section we enunciate an improved version of Lemma 3.3, which can be found as the Theorem 3.1 in \cite{FARID19917}. We use it to prove an improved version of Theorem 3.4.

\begin{lema}[Farid and Lancaster]
Let $M$ be an infinite matrix, considered as a linear operator on $L^p (\Z)$ for $1\leq p <\infty$, with columns in $L^1 (\Z)$. Define $r_k := \sum_{j \in \Z , j \neq k} |M_{jk}|$ and assume that 

\begin{enumerate}
    \item[(i).] $M_{kk} \neq 0, \esp \esp \text{for all} \esp k \in \Z$ and $|M_{kk}| \to \infty$ as $|k| \to \infty$.
    \item[(ii).]There exits  $s \in [0,1)$ such that for all $k \in \Z$ $$r_k = s_k |M_{kk}|, \esp \esp s_k \in [0,s].$$
    \item[(iii).] Either $FD^{-1}$ and $(I + \mu FD^{-1})^{-1}$ exists and are in $\mathcal{L}(L^p (\Z))$ for every $\mu \in (0,1]$, or $D^{-1}F$ and $(I + \mu D^{-1} F)^{-1}$ exists and are in $\mathcal{L}(L^p(\T))$.
\end{enumerate}

Then $M$ is a closed operator and the spectrum $Spec(M)$ is nonempty and consists of discrete nonzero eigenvalues, lying in the set 

$$\bigcup_{k \in \Z} \overline{B_\C (M_{kk} , r_k)},$$

where the closed balls $\overline{B_\C (M_{kk} , r_k)}$ are called the Gershgorin discs. Furthermore, any set consisting  of  $n$ Gershgorin discs whose union is disjoint from all other Gersgorin discs intersects $Spec(M)$ in a finite set of eigenvalues of $M$ with
total algebraic multiplicity $n$.
\end{lema}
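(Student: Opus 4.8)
The plan is to exploit the splitting $M=D+F$ together with the compactness of $D^{-1}$. Since $M_{kk}\neq 0$ for all $k$ and $|M_{kk}|\to\infty$, the diagonal operator $D$ is densely defined (its domain contains the finitely supported sequences) and closed, and $D^{-1}$ is the diagonal operator with entries $M_{kk}^{-1}\to 0$, hence compact on $L^p(\Z)$. Writing $M=(I+FD^{-1})D$ and invoking hypothesis $(iii)$ with $\mu=1$ in the first alternative --- or $M=D(I+D^{-1}F)$ in the second --- the factor $I+FD^{-1}$ is bounded and boundedly invertible, so $M$ is closed, $0\in Res(M)$, and $M^{-1}=D^{-1}(I+FD^{-1})^{-1}$ is compact. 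By the resolvent identity, compactness of $M^{-1}$ propagates to $(M-\lambda I)^{-1}$ for every $\lambda\in Res(M)$, i.e. $M$ has compact resolvent. Standard spectral theory for such operators (analytic Fredholm theory) then yields that $Spec(M)$ consists of isolated eigenvalues of finite algebraic multiplicity with no finite accumulation point, and $0\notin Spec(M)$; in particular every spectral value is a discrete, nonzero eigenvalue.

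To localise the spectrum, observe that each $M_{kk}$ lies in its own disc $\overline{B_\C(M_{kk},r_k)}$, so it suffices to treat $\lambda\notin\{M_{kk}:k\in\Z\}$. For such $\lambda$ the operator $D-\lambda I$ is boundedly invertible (one has $\inf_k|M_{kk}-\lambda|>0$ since $|M_{kk}-\lambda|\to\infty$), and the diagonal operator $(D-\lambda I)^{-1}-D^{-1}$, with entries $\lambda\,(M_{kk}(M_{kk}-\lambda))^{-1}\to 0$, is compact. Hence $F(D-\lambda I)^{-1}=FD^{-1}+K_\lambda$ with $K_\lambda$ compact, and
$$M-\lambda I=\big(I+F(D-\lambda I)^{-1}\big)(D-\lambda I)=(I+FD^{-1})\big(I+(I+FD^{-1})^{-1}K_\lambda\big)(D-\lambda I).$$
Since $I+FD^{-1}$ and $D-\lambda I$ are invertible and $(I+FD^{-1})^{-1}K_\lambda$ is compact, the Fredholm alternative shows $M-\lambda I$ is invertible iff it is injective; thus every $\lambda\in Spec(M)$ with $\lambda\notin\{M_{kk}\}$ is an eigenvalue of $M$. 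To finish the inclusion $Spec(M)\subseteq\bigcup_k\overline{B_\C(M_{kk},r_k)}$ I would pass to the transpose $M^{\mathrm T}$ (acting on $L^{p'}(\Z)$, resp. on $c_0$ when $p=1$), whose off-diagonal row sums are exactly the $r_k$: by Riesz--Schauder applied to the compact operator $M^{-1}$ the eigenvalues of $M$ and $M^{\mathrm T}$ coincide, and for an eigenvector $y$ of $M^{\mathrm T}$ one picks a coordinate index $m$ realising $\sup_k|y_k|$ --- attained because $y$ decays --- and reads the estimate $|\mu-M_{mm}|\le r_m$ off the $m$-th equation of $M^{\mathrm T}y=\mu y$.

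For the multiplicity statement, consider the homotopy $M_\mu:=D+\mu F$, $\mu\in[0,1]$. By hypothesis $(iii)$ each $M_\mu$ is closed with compact resolvent, and by the estimate above its Gershgorin discs $\overline{B_\C(M_{kk},\mu r_k)}$ lie inside those of $M$. Given a union $\Gamma$ of $n$ discs of $M$ whose closure misses all the other discs, choose a finite cycle $\gamma$ enclosing exactly the centres $M_{kk}$ lying in $\Gamma$ and staying outside every disc of $M$; then $\gamma$ avoids $Spec(M_\mu)$ for all $\mu\in[0,1]$, the Riesz projection $P_\mu:=\frac{1}{2\pi i}\oint_\gamma(\zeta-M_\mu)^{-1}\,d\zeta$ has finite rank, and it depends norm-continuously on $\mu$ --- this is the point where the uniform-in-$\mu$ invertibility of $I+\mu FD^{-1}$ from hypothesis $(iii)$ is indispensable. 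Hence $\operatorname{rank}P_\mu$ is constant, equal to $\operatorname{rank}P_0=\#\{k:M_{kk}\in\Gamma\}=n$, which is the total algebraic multiplicity of $Spec(M)\cap\Gamma$. Taking $\Gamma$ to be isolated discs or isolated finite clusters (which exist since $|M_{kk}|\to\infty$) shows in addition that $Spec(M)$ is infinite, hence nonempty.

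I expect the main obstacle to be making the eigenvalue Gershgorin estimate fully rigorous in the $L^p$ setting, and in particular at the endpoint $p=1$, where the transpose acts on $\ell^\infty$ (or $c_0$), a largest coordinate need not exist, and one must argue through $c_0$ or with an approximate maximiser. A close second is the norm-continuity of $\mu\mapsto P_\mu$, which is precisely why hypothesis $(iii)$ is stated with the whole one-parameter family $(I+\mu FD^{-1})^{-1}\in\mathcal L(L^p(\Z))$, $\mu\in(0,1]$, rather than just at $\mu=1$.
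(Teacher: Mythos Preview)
The paper does not prove this lemma at all: it is quoted verbatim from Farid and Lancaster (cited as Theorem~3.1 in \cite{FARID19917}) and then applied to obtain Theorem~3.5. So there is no ``paper's own proof'' to compare against; the authors treat it as a black box from the literature.

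Your sketch follows exactly the strategy of the original Farid--Lancaster argument: write $M=(I+FD^{-1})D$, use $|M_{kk}|\to\infty$ to get compactness of $D^{-1}$ and hence of $M^{-1}$, conclude via analytic Fredholm theory that the spectrum is discrete, then run the classical Gershgorin estimate on the transpose (where the $r_k$ become row sums) to localise eigenvalues, and finally use the homotopy $M_\mu=D+\mu F$ together with continuity of the Riesz projections to count multiplicities. The two difficulties you flag --- attaining a maximal coordinate for the eigenvector in the $L^p$/$c_0$ setting, and the uniform-in-$\mu$ control needed for continuity of $P_\mu$ --- are precisely the points where the Farid--Lancaster paper does the real work, and your identification of hypothesis~(iii) as the mechanism that makes the second one go through is on target. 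One small refinement: for the Gershgorin estimate you do not strictly need to pass to the dual; since $M$ has compact resolvent its eigenvectors lie in the domain of $M$, hence decay, and after multiplying by $D^{-1}$ one can argue directly in $c_0$ on the original side. This avoids the $p=1$ duality wrinkle you mention.
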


As an immediate consequence we have:

\begin{teo}
Let $\sigma : \T \times \Z \to \C$ be a measurable function such that $\sigma (\cdot , k) \in L^2(\T)$ for each $k \in \Z$ and $T_\sigma$ be its associated pseudo-differential operator. Let $M_\sigma$ be the associated matrix. Assume that

\begin{enumerate}
    \item[(i).] $\int_\T \sigma(x,k) dx \neq 0,$ for all $ k \in \Z$ and $\big| \int_\T \sigma(x,k) dx \big| \to \infty$ as $|k| \to \infty $.
    \item[(ii).] Rows and columns of $M_\sigma$ are in $L^1 (\Z)$.
    \item[(iii).]$$\sup_{k \in \Z} \big| \int_\T \sigma(x,k) dx \big|^{-1} \cdot \sum_{ j \neq k} |\widehat{\sigma}(j-k,k)| < 1,$$ and$$\sup_{j \in \Z} \Big| \int_\T \sigma(x,j) dx \Big|^{-1} \cdot \sum_{k \neq j} |\widehat{\sigma}(j-k,k)| < 1.$$
\end{enumerate}
Then $T_\sigma$ is a closed operator and the spectrum $Spec(T_\sigma)$ is nonempty and consists of discrete nonzero eigenvalues, lying in the set 

$$\bigcup_{k \in \Z} \overline{B_\C (a_{kk} , r_k)} \esp \esp \text{where} \esp \esp a_{kk} = \int_\T \sigma(x,k) dx, \esp r_k = \sum_{ j \neq k} |\widehat{\sigma}(j-k,k)|.$$

Furthermore, any set of $n$ Gershgorin discs whose union is disjoint from all other Gersgorin discs intersects $Spec(T_\sigma)$ in a finite set of eigenvalues of $T_\sigma$ with
total algebraic multiplicity $n$.
\end{teo}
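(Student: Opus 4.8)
The plan is to carry the entire statement over to the sequence space $L^2(\Z)$ and there invoke Lemma 3.4 for the associated matrix $M_\sigma$ of $T_\sigma$. Recall that $\mathcal{F}_\T$ extends to a unitary operator $L^2(\T)\to L^2(\Z)$ and that $T_\sigma=\mathcal{F}_\T^{-1}M_\sigma\mathcal{F}_\T$, so $T_\sigma$ (densely defined on $L^2(\T)$) and $M_\sigma$ (densely defined on $L^2(\Z)$) are unitarily equivalent. Hence $T_\sigma$ is closed exactly when $M_\sigma$ is, $Spec(T_\sigma)=Spec(M_\sigma)$, the eigenvalues coincide with the same algebraic multiplicities, and the Gershgorin discs attached to $M_\sigma$ are exactly those of the statement. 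So it is enough to verify, with $p=2$, the three hypotheses of Lemma 3.4 for $M_\sigma$.

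Next I would identify the data of Lemma 3.4 for $M_\sigma$. Since $(M_\sigma)_{jk}=\widehat\sigma(j-k,k)$, one has $(M_\sigma)_{kk}=\widehat\sigma(0,k)=\int_\T\sigma(x,k)\,dx=a_{kk}$, while $r_k=\sum_{j\neq k}|(M_\sigma)_{jk}|=\sum_{j\neq k}|\widehat\sigma(j-k,k)|$, which is finite by hypothesis (ii). Thus hypothesis (i) of Lemma 3.4 is precisely hypothesis (i) of the theorem. For hypothesis (ii) of Lemma 3.4, set $s_k:=r_k/|(M_\sigma)_{kk}|$ and $s:=\sup_{k\in\Z}s_k$; the first inequality in (iii) gives $s<1$, and by construction $r_k=s_k|(M_\sigma)_{kk}|$ with $s_k\in[0,s]$. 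Hypothesis (ii) of the theorem (rows and columns of $M_\sigma$ in $L^1(\Z)$) supplies the blanket requirement in Lemma 3.4 that the columns lie in $L^1(\Z)$ and that $M_\sigma$ be well defined as a densely defined operator on $L^2(\Z)$.

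The substantive step, exactly as in the proof of Theorem 3.4, is hypothesis (iii) of Lemma 3.4; I would check the first of the two alternatives, using the splitting $M_\sigma=D_\sigma+F_\sigma$ into diagonal and off-diagonal parts. By hypothesis (i), $\inf_k|(M_\sigma)_{kk}|>0$, so $D_\sigma^{-1}$ is a bounded diagonal operator on $L^2(\Z)$ and $F_\sigma D_\sigma^{-1}$ is densely defined; applying Proposition 3.1, its $L^2(\Z)$-operator norm is bounded by the geometric mean of the $L^\infty(\Z)$-operator norms of $F_\sigma D_\sigma^{-1}$ and of $(F_\sigma D_\sigma^{-1})^*$, and those two norms are exactly the two weighted off-diagonal sums appearing in (iii), so $\norm{F_\sigma D_\sigma^{-1}}_{\mathcal{L}(L^2(\Z))}<1$. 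Consequently $\norm{\mu F_\sigma D_\sigma^{-1}}_{\mathcal{L}(L^2(\Z))}<1$ for every $\mu\in(0,1]$, and $I+\mu F_\sigma D_\sigma^{-1}$ is invertible with bounded inverse by the Neumann series (Lemma 2.1 in \cite{conwayfa}). With all hypotheses of Lemma 3.4 met, it follows that $M_\sigma$ is closed and $Spec(M_\sigma)$ is nonempty, consisting of discrete nonzero eigenvalues lying in $\bigcup_{k\in\Z}\overline{B_\C(a_{kk},r_k)}$, and that any union of $n$ Gershgorin discs isolated from the others meets $Spec(M_\sigma)$ in finitely many eigenvalues of total algebraic multiplicity $n$; transferring back through the unitary equivalence gives the statement for $T_\sigma$. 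I expect the only mildly delicate points to be matching the two $L^\infty(\Z)$-norms of $F_\sigma D_\sigma^{-1}$ and its adjoint to the exact sums written in (iii), and checking that algebraic multiplicities and disc-isolation pass through the unitary equivalence; the latter is routine since $\mathcal{F}_\T$ conjugates the Riesz spectral projections of $T_\sigma$ and $M_\sigma$ and hence preserves their ranks.
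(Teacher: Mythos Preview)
Your proposal is correct and follows essentially the same route as the paper. The paper's proof is a two-line sketch: hypothesis (i) of the theorem matches hypothesis (i) of the Farid--Lancaster lemma (Lemma~3.4), and Proposition~3.1 together with hypotheses (ii) and (iii) of the theorem yields hypotheses (ii) and (iii) of that lemma. You have unpacked exactly this, including the Neumann-series verification of hypothesis (iii) via the bound $\norm{F_\sigma D_\sigma^{-1}}_{\mathcal{L}(L^2(\Z))}<1$, which mirrors the argument already given for Theorem~3.4; the transfer through the unitary equivalence $T_\sigma=\mathcal{F}_\T^{-1}M_\sigma\mathcal{F}_\T$ is the factorisation the paper set up earlier in the section. (Note that the paper's printed proof cites ``Lemma~3.3'' where Lemma~3.4 is intended; your reference to Lemma~3.4 is the correct one.)
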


\begin{proof}
Condition $(i)$ in Theorem 3.5. is the condition $(i)$ of Lemma 3.3. Proposition 3.1. and conditions $(ii)$,$(iii)$ of Theorem 3.5 implies conditions $(ii)$ and $(iii)$ of Lemma 3.3.
\end{proof}

\vspace{3mm}
\section{\textbf{Gohberg's Lemma in $L^p (\T)$}}

In this section we discuss a proof of Gohberg's Lemma in  $L^p (\T)$.

\begin{proof}[ Proof of Gohberg's lemma:]
If $u$ is a nonzero function in $C^\infty (\T)$ then

\begin{align*}
    T_\sigma u (x) &= \sum_{k \in \Z} \sigma (x,k) \widehat{u} (k) e^{i x \cdot k} \\
    &= \sum_{k \in \Z} \int_\T \sigma(x,k) u(y) e^{i(x-y)k} dy \\
    &= \int_\T (\mathcal{F}_\T^{-1} \sigma) (x, x-y) u(y) dy \\
    &=\int_\T (\mathcal{F}_\T^{-1} \sigma) (x, y) u(x-y) dy,
\end{align*}
where $\mathcal{F}_\T^{-1} \sigma$ is the inverse Fourier transform of $\sigma$ with respect to the second variable in a distributional sense. Hence in a distributional sense $$T_\sigma u (x)=\int_\T (\mathcal{F}_\T^{-1} \sigma) (x, y) u(x-y) dy.$$
Since $\sigma$ is continuous it follows that for all $k \in \Z$ exists a $x_k \in [-\pi, \pi]$ such that 
$$|\sigma(x_k , k)| = \sup_{x \in \T} |\sigma(x,k)|.$$
By definition of $d_\sigma$, there exists a subsequence $\{(x_{k_l} , k_l)\}_{l \in \N}$ such that $$|k_l|\to \infty \esp \esp \text{and} \esp \esp |\sigma(x_{k_l} , k_l)|\to d_\sigma,$$
as $l \to \infty$. Then, since $\T$ is compact, the sequence $\{x_{k_l}\}_{l \in \N}$ must have an cluster point $x_0$ in $\T$ and a subsequence $\{x_{k_{l_m}}\}_{m \in \N}$ such that $x_{k_{l_m}} \to x_0$ as $m \to \infty$. For simplicity we will rename the sequence $\{(x_{k_{l_m}} , k_{l_m})\}_{m \in \N}$ as the original one $\{(x_{k_l} , k_l)\}_{l \in \N}$. Define the functions $u_{k_l}$ on $\T$ by $$u_{k_l} (x) = u(x-x_{k_l}) e^{ix \cdot k_l},$$
then wee see that $\norm{u_{k_l}}_{L^p (\T)} = \norm{u}_{L^p (\T)}$ for $l \in \N$. Let's see that the sequence $\{u_{k_l}\}_{l \in \N}$ converges weakly to zero. First, for any $v \in L^q (\T)$
\begin{align*}
    \int_\T u_{k_l} (x) v(x) dx =& \int_\T ( u(x-x_{k_l}) - u(x - x_0) ) v(x) e^{ix \cdot k_l} dx \\
    &+ \int_\T u(x-x_0) v(x) e^{ix \cdot k_l} dx.
\end{align*}

Since by Hölder inequality $u(x-x_0) v(x) \in L^1 (\T)$ then we can apply the Riemann-Lebesgue lemma and consequently, for any $\varepsilon > 0$, there is a natural number $N_1$ such that
$$l \geq N_1 \implies \Big| \int_\T u(x-x_0) v(x) e^{ix \cdot k_l} dx \Big|< \frac{\varepsilon}{2},$$and second, since $u$ is continuous, for any $\varepsilon>0$ there exist $N(\varepsilon) \in \N$, $\delta (\varepsilon) > 0$ such that 
\begin{align*}
    l \geq N(\varepsilon) &\implies |x_{k_l} - x_0| < \delta (\varepsilon) \\
    &\implies |u(x-x_{k_l}) - u(x - x_0)| < \frac{\varepsilon}{2} \cdot \norm{v}_{L^q (\T)}^{-1},
\end{align*}
thus for any $\varepsilon>0$ $$l \geq \max(N_1 , N(\varepsilon)) \implies \Big| \int_\T u_{k_l} (x) v(x) dx \Big|< \varepsilon ,$$
as an immediate consequence $$\norm{K u_{k_l}}_{L^p (\T)} \to 0 \esp \esp \text{as} \esp \esp l \to \infty,$$
for every $K \in \mathfrak{K} (L^p (\T))$. Then for an arbitrary positive number $\varepsilon$ and sufficiently large $l$ 

\begin{align*}
    \norm{K u_{k_l}}_{L^p (\T)} \leq \varepsilon  \norm{u_{k_l}}_{L^p (\T)} \tag{G1}.
\end{align*}

On the other hand, we have the following lemma, whose proof can be found in \cite{Molahajloo2010}.

\begin{lema}
$\norm{\sigma(\cdot,k_l) - T_\sigma u_{k_l}}_{L^p (\T)} \to 0$ as $l \to \infty$.
\end{lema}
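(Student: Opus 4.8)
The plan is to prove the lemma by writing $T_\sigma u_{k_l}$ explicitly through the Fourier coefficients of $u_{k_l}$ and showing that, modulo a sequence tending to $0$ in the sup-norm, it is the multiplication of $u_{k_l}$ by the ``frozen'' symbol $x\mapsto\sigma(x,k_l)$; since $\T$ carries a probability measure, sup-norm smallness of the remainder gives $L^p$-smallness for every $1<p<\infty$. First I would compute, from $u_{k_l}(x)=u(x-x_{k_l})e^{ix\cdot k_l}$, that $\widehat{u_{k_l}}(k)=e^{-ix_{k_l}\cdot(k-k_l)}\widehat{u}(k-k_l)$; since $u\in C^\infty(\T)$ the coefficients $\widehat{u}$ are rapidly decreasing, and since $\sigma\in S^0_{1,0}(\T\times\Z)$ is bounded, the series $\sum_k\sigma(x,k)\widehat{u_{k_l}}(k)e^{ix\cdot k}$ converges absolutely and uniformly; shifting the summation index to $j=k-k_l$ yields
$$T_\sigma u_{k_l}(x)=e^{ix\cdot k_l}\sum_{j\in\Z}\sigma(x,k_l+j)\,\widehat{u}(j)\,e^{i(x-x_{k_l})\cdot j}.$$
Using Fourier inversion for $u$, the $j$-frozen term $e^{ix\cdot k_l}\sigma(x,k_l)\sum_j\widehat{u}(j)e^{i(x-x_{k_l})\cdot j}$ equals $\sigma(x,k_l)\,u_{k_l}(x)$, so the remainder is
$$T_\sigma u_{k_l}(x)-\sigma(x,k_l)\,u_{k_l}(x)=e^{ix\cdot k_l}\sum_{j\in\Z}\big(\sigma(x,k_l+j)-\sigma(x,k_l)\big)\widehat{u}(j)\,e^{i(x-x_{k_l})\cdot j}.$$

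The heart of the argument is a uniform-in-$x$ bound on this remainder, obtained by splitting the sum at $|j|\le|k_l|/2$ and $|j|>|k_l|/2$. In the first range, writing $\sigma(x,k_l+j)-\sigma(x,k_l)$ as a telescoping sum of first differences and using the class estimate $|\Delta_k\sigma(x,k)|\le C\langle k\rangle^{-1}$ — together with the fact that every intermediate index $k_l+i$ (with $|i|\le|j|\le|k_l|/2$) satisfies $\langle k_l+i\rangle^{-1}\le 2\langle k_l\rangle^{-1}$ — gives $|\sigma(x,k_l+j)-\sigma(x,k_l)|\le C'|j|\langle k_l\rangle^{-1}$ uniformly in $x$, so this part is bounded by $C'\langle k_l\rangle^{-1}\sum_j|j|\,|\widehat{u}(j)|\to 0$ as $|k_l|\to\infty$, the series converging because $\widehat{u}$ is rapidly decreasing. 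In the second range, the crude bound $|\sigma(x,k_l+j)-\sigma(x,k_l)|\le 2\sup_{y,k}|\sigma(y,k)|<\infty$ controls the part by $2\sup|\sigma|\cdot\sum_{|j|>|k_l|/2}|\widehat{u}(j)|$, which is the tail of a convergent series and hence tends to $0$. Combining the two estimates shows $\norm{T_\sigma u_{k_l}-\sigma(\cdot,k_l)\,u_{k_l}}_{L^\infty(\T)}\to 0$, whence the same convergence holds in $L^p(\T)$, which is the assertion of the lemma.

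The step I expect to be the main obstacle is the uniform low-frequency estimate: one must be careful about the signs and relative magnitudes of $k_l$ and $j$ so that the telescoping of $\Delta_k\sigma$ really produces a gain of order $\langle k_l\rangle^{-1}$ (this is precisely why the cut-off is placed at $|j|\le|k_l|/2$), and the hypothesis $u\in C^\infty(\T)$ is used in an essential way, since mere continuity would not guarantee convergence of $\sum_j|j|\,|\widehat{u}(j)|$. The remaining ingredients — absolute convergence of the defining series, the index shift, and the passage from $L^\infty(\T)$ to $L^p(\T)$ on the finite-measure space $\T$ — are routine.
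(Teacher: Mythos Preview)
Your argument is correct. Two remarks are worth making.

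First, the paper does not actually prove this lemma: it simply cites \cite{Molahajloo2010}, so there is no in-house proof to compare against. Your approach --- shift the summation index, subtract the ``frozen-symbol'' term $\sigma(\cdot,k_l)u_{k_l}$, and split the remainder into a low-frequency part controlled by the first-difference estimate $|\Delta_k\sigma(x,k)|\le C\langle k\rangle^{-1}$ and a high-frequency tail controlled by the rapid decay of $\widehat u$ --- is the standard one and is essentially what the cited reference does (there phrased via the kernel representation $T_\sigma u(x)=\int_\T(\mathcal F_\T^{-1}\sigma)(x,y)\,u(x-y)\,dy$, but the mechanism is identical). Your observation that the cut-off $|j|\le|k_l|/2$ is what keeps all intermediate indices $k_l+i$ comparable to $k_l$ is exactly the point.

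Second, you have silently proved the \emph{intended} statement
\[
\norm{\sigma(\cdot,k_l)\,u_{k_l} - T_\sigma u_{k_l}}_{L^p(\T)}\to 0,
\]
rather than the literal one $\norm{\sigma(\cdot,k_l) - T_\sigma u_{k_l}}_{L^p(\T)}\to 0$, which is almost certainly a typo: the subsequent chain of inequalities in the paper uses precisely the version with the factor $u_{k_l}$ (see the passage from $\norm{T_\sigma u_{k_l}}_{L^p}$ to $\norm{\sigma(\cdot,k_l)u_{k_l}}_{L^p}$). So your reading of the statement is the right one.
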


Then for sufficiently large $l$,

\begin{align*}
    \norm{\sigma(\cdot,k_l)}_{L^p (\T)} - \norm{ T_\sigma u_{k_l}}_{L^p (\T)} \leq \varepsilon \norm{u}_{L^p (\T)} .\tag{G2}
\end{align*}

By using again the continuity of $\sigma (\cdot , k_l)$ there exists a positive number $\delta$ such that for all $x$ in $[-\pi + x_{k_l}, \pi + x_{k_l}]$ with $|x - x_{k_l}|< \delta$, we have

\begin{align*}
    |\sigma(x, k_l) - \sigma (x_{k_l} , k_l)| < \varepsilon \tag{G3}.
\end{align*}

Let $u \in C^\infty (\T)$ such that $u(x) =0$ for $|x| \geq \delta$. Then $u_{k_l} (x)=0$ for all $x$ in $[-\pi + x_{k_l}, \pi + x_{k_l}]$ with $|x - x_{k_l}| \geq \delta$. So 

\begin{align*}
    \norm{\sigma(x_{k_l},k_l) u_{k_l} }_{L^p (\T)} &- \norm{\sigma(\cdot, k_l) u_{k_l}}_{L^p (\T)} \leq \norm{\sigma(x_{k_l}) u_{k_l} -\sigma(\cdot, k_l) u_{k_l}}_{L^p (\T)}\\
    &= \Big( \int_{- \pi + x_{k_l}}^{\pi + x_{k_l}} |\sigma(x_{k_l},k_l) - \sigma (x,k_l)|^p |u_{k_l} (x)|^p dx \Big)^{1/p}\\
    &= \Big( \int_{|x-x_{k_l}|<\delta} |\sigma(x_{k_l},k_l) - \sigma (x,k_l)|^p |u_{k_l} (x)|^p dx \Big)^{1/p},
\end{align*}then by (G3)\begin{align*}
     | \sigma (x_{k_l} , k_l)| \norm{u}_{L^p (\T)} - \norm{\sigma (\cdot , k_l) u_{k_l}}_{L^p (\T)} < \varepsilon \norm{u}_{L^p(\T)}. \tag{G4}
\end{align*}
Combining (G1), (G2) and (G4), we get for sufficiently large k,

\begin{align*}
    \norm{u}_{L^p (\T)} \norm{T_\sigma - K}_{\mathcal{L} (L^p (\T))} &\geq \norm{(T_\sigma - K)u}_{L^p (\T)}\\
    &\geq \norm{T_\sigma u}_{L^p (\T)} - \norm{K u}_{L^p (\T)}\\
    &\geq \norm{T_\sigma u}_{L^p (\T)} - \varepsilon \norm{ u}_{L^p (\T)}\\
    &\geq \norm{\sigma (\cdot, k_l) u_{k_l}}_{L^p (\T)} - 2 \varepsilon \norm{u}_{L^p (\T)}\\
    & \geq |\sigma (x_{k_l},k_l)| \norm{u}_{L^p (\T)} - 3 \varepsilon \norm{u}_{L^p (\T)}\\
    &= (|\sigma (x_{k_l},k_l)| - 3 \varepsilon) \norm{u}_{L^p (\T)}.
\end{align*}

Letting $l \to \infty$, we get $\norm{T_\sigma - K}_{\mathcal{L} (L^p (\T))} \geq d_\sigma - 3 \varepsilon,$ and finally, using the fact that $\varepsilon$ is an arbitrary positive number, we have $$\norm{T_\sigma - K}_{\mathcal{L} (L^p (\T))} \geq d_\sigma.$$
\end{proof}

\section*{\textbf{Acknowledgments}.}
I sincerely thank the guidance of Carlos Andres Rodriguez Torijano who proposed me this research as the first step in my career as a mathematical researcher. 
I also want to thank professor Michael Ruzhansky for his comments.
\nocite{*}
\bibliographystyle{acm}
\bibliography{main}

\end{document}